\newcommand{\summe}[2]{\displaystyle\sum_{#1}^{#2}}
\newcommand{\summezwei}[2]{\sum_{#1}^{#2}}
\newcommand{\integral}[4]{\displaystyle\int\limits_{#1}^{#2}{#3}\,{#4}}
\newcommand{\indikator}[2]{\mathds{1}_{{#1}}({#2})}
\newcommand{\indikatorzwei}[1]{\mathds{1}_{{#1}}}
\newcommand{\limes}[2]{\lim \limits_{#1 \rightarrow #2}}
\newcommand{\B}{\mathcal{B}}
\newcommand{\R}{\mathbb{R}}
\newcommand{\N}{\mathbb{N}}
\newcommand{\Pro}{\mathbb{P} } 
\newcommand{\eps}{\varepsilon} 
\newcommand{\secret}[1]{}
\newcommand{\norm}[1]{\| {#1}\|}
\newcommand{\bignorm}[1]{ \left \Vert {#1} \right \Vert}
\newcommand{\fmax}[2]{\text{$\overset{{#2} \textcolor{white}{h}}{\underset{{#1} \textcolor{white}{h}}{\bigvee\nolimits_f}} \; $}}
\newcommand{\fmaxzwei}[2]{\text{$\overset{{#2} \textcolor{white}{h}}{\underset{{#1} \textcolor{white}{h}}{\bigvee^{*}\nolimits_f}} \; $}}
\newtheorem{theorem}{Theorem}[section]
\newtheorem{lemma}[theorem]{Lemma}
\newtheorem{prop}[theorem]{Proposition}
\theoremstyle{definition}
\newtheorem{defi}[theorem]{Definition}
\newtheorem{example}[theorem]{Example}
\theoremstyle{remark}
\newtheorem{remark}[theorem]{Remark}
\numberwithin{equation}{section}
\begin{document}
\sloppy
\title[]{Implicit max-stable extremal integrals}

\author{D. Kremer}
\address{Dustin Kremer, Department Mathematik, Universit\"at Siegen, 57068 Siegen, Germany}
\email{kremer\@@{}mathematik.uni-siegen.de}

\date{\today}

\subjclass[2010]{60G57,60G60,60G70}
\keywords{Implicit max-stable distributions, Independently scattered random sup-measures, Stochastic integrals}

\begin{abstract}
Recently, the notion of implicit extreme value distributions has been established, which is based on a given loss function $f \ge 0$. From an application point of view, one is rather interested in extreme loss events that occur relative to $f$ than in the corresponding extreme values itself. In this context, so-called $f$-implicit $\alpha$-Fr\'{echet} max-stable distributions arise and have been used to construct independently scattered sup-measures that possess such margins. In this paper we solve an open problem in \cite{goldbach} by developing a stochastic integral of a deterministic function $g\ge 0$ with respect to implicit max-stable sup-measures. The resulting theory covers the construction of max-stable extremal integrals (see \cite{stoev}) and, at the same time, reveals striking parallels. 
\end{abstract}
\maketitle
\section{Introduction} \label{section1}
The theory of implicit extreme values is highly motivated by application, such as hydrology (see  the introductory example in \cite{implicit}), and tries to analyze the circumstances in which several impact factors lead to extreme loss or damage. Hence, different from classical extreme value theory (shortly: EVT), this perspective is less interested in the attained extreme values than in the study of complex systems that cause these extreme values. Particularly, the isolated impacts (components) of the system do not have to be extreme in any sense, but can still contribute to such \textit{extreme loss events}. \newline 
In this context it is reasonable to assume that the connection between the impact factors and the related loss is known. More precisely, throughout the paper we consider a fixed function $f: \R^d \rightarrow [0, \infty)$ that serves as some kind of \textit{loss function}, depending on $d \ge 1$ impact factors. For technical reasons, we have to assume that $f$ fulfills the following properties, which still appear natural for most examples: 
\begin{itemize}
\item[(i)] $f$ is continuous.
\item[(ii)] $f(x)=0$ if and only if $x=0$.
\item[(iii)] $f$ is $1$-homogeneous, i.e. we have $f(\lambda x)=\lambda f(x)$ for every $\lambda \ge 0$ and $x \in \R$.
\end{itemize}
Turning over to probability, we consider a random vector $X=(X^{(1)},...,X^{(d)})$ modeling the joint behavior of the $d$ impact factors. Then, for a sequence $(X_j)_{j \in \N}$ of identically distributed and independent (i.i.d.) random vectors, a major subject of classical multivariate EVT is to understand the asymptotic behavior (under possible normalization) of
\begin{equation} \label{eq:19081902}
M_k:= \max_{j=1,...,k}  X_j := \bigvee \limits_{j=1}^k X_j
\end{equation}
as $k \rightarrow \infty$, where the maximum is meant component-wise. Sometimes, one is also interested in the study of $\max_{j=1,...k} f(X_j)$, which leads to an associated univariate problem. \newline
In contrast and as motivated before already, we want to pursue an implicit approach instead. Thus, if we assume for a moment that the \textit{observations} $X_1,X_2,...$ of the sample do not coincide, this suggests to consider
\begin{equation} \label{eq:19081901}
X_{j(k)}:= \text{argmax}_{j=1,...,k} \, f(X_j), \quad k \in \N.
\end{equation} 
Unfortunately, there will be ties in general. For this reason we replace the argmax function by the following \textit{$\vee_f$-operation}, which has been introduced in \cite{goldbach}. Hence, let $\vee_f: \R^d \times \R^d \rightarrow \R^d$, defined by
\begin{equation} \label{eq:25071981}
\vee_f(x_1,x_2):= x_1 \vee_f x_2:=  \begin{cases} x_1, & \text{if $f(x_1) \le f(x_2)$} \\ x_2, & \text{if $f(x_1)< f(x_2)$.} \end{cases}
\end{equation}
Inductively, for $x_1,...,x_k \in \R^d$, we define
\begin{equation*}
\fmax{j=1}{k} x_j :=\fmax{1\le j \le k}{} x_j :=  x_1 \vee_f \cdots \vee_f x_k:= (x_1 \vee_f \cdots x_{k-1}) \vee_f x_k.
\end{equation*}
Note that the resulting mapping is $\B((\R^d)^k)$-$\B(\R^d)$ measurable (see Lemma 1.1.6 in \cite{goldbach}), where $\B(\R^d)$ denotes the collection of all Borel sets $A \subset \R^d$, and that the $\vee_f$-operation is associative, but not commutative in general. However, it is a main feature of the $\vee_f$-operation that the result is always part of the sample (in contrast to \eqref{eq:19081902}). \newline
Anyway, \eqref{eq:19081901} can be rewritten as $X_{j(k)}=X_1 \vee_f \cdots \vee_f X_k$ now. Also note that the study of $a_k^{-1} X_{j(k)}$, where $a_k>0$ is suitable, and the characterization of possible limits in distribution (as $k \rightarrow \infty$) are the main topic of \cite{implicit}. The authors of \cite{implicit} call these limits \textit{implicit extreme value distributions}. There it is also shown that, under mild assumptions, the class of implicit extreme value distributions coincides with the class of \textit{implicit max-stable distributions}. Here, an $\R^d$-valued random vector $Y$ with distribution $\mu:=\mathcal{L}(Y)$ is said to be implicit max-stable if, for every $k \in \N$, there exists some $b_k>0$ such that 
\begin{equation} \label{eq:08071902}
b_k^{-1} \fmax{j=1}{k} Y_j=\fmax{j=1}{k} b_k^{-1}  Y_j \overset{d}{=}Y
\end{equation}
holds true, where $Y_1,Y_2,...$ is an i.i.d. sequence with common distribution $\mu$ and where $\overset{d}{=}$ denotes equality in distribution. The importance of stability equalities like \eqref{eq:08071902} is familiar, for example when using the \glqq$+$\grqq-operation or the \glqq$\vee$\grqq-operation on $\R^d$ instead of $\vee_f$. Then, stochastic processes whose margins possess such distributions are of interest and are often constructed by using certain stochastic integrals with respect to \textit{independently scattered random measures} (see \cite{SaTaq94}) or corresponding \textit{sup-measures} (see \cite{stoev}), respectively. In the last-mentioned case this leads to an \textit{extremal integral}, which is well-defined for every function $g$ that belongs to $ L^{\alpha}_{+}(m)$, where
\begin{equation} \label{eq:29071988}
 L^{\alpha}(m):= \left \{g: E \rightarrow \R \, | \, \text{$g$ is measurable with } \norm{g}_{\alpha}:= \left (\int_E |g|^{\alpha} \, dm \right)^{1 / \alpha}<\infty  \right\}
\end{equation}
and $ L^{\alpha}_{+}(m):=\{g:E \rightarrow \R_{+}  \, | \, g \in  L^{\alpha}(m)  \}$. Here, $\alpha>0$ is somehow connected to the underlying sup-measure. In particular, this diversity of possible \textit{integrands} $g$ allows the authors in \cite{stoev} to study a deep relationship between extremal integral representations and max-stable stochastic processes that are well-known in literature.  \newline
Hence, in \cite{goldbach} the notion of a so-called \textit{implicit sup-measure} is introduced (see \cref{15071901} below), which extends the sup-measures from \cite{stoev}. Actually, \cite{goldbach} even provides the existence of such implicit sup-measures, denoted by $M$ in the sequel. The details will be discussed in \cref{section2}. For the time being, we rather refer the reader to Example 3.1.15 in \cite{goldbach}, where $X(t):=M([0,t])$ leads to an $\R^d$-valued stochastic process $\mathbb{X}=\{X(t):t \ge 0\}$ that is implicit max-stable in the sense of \cref{12071904} below. Also note that \cite{goldbach} proposes the definition of a certain stochastic integral with respect to $M$, which allows for the representation 
\begin{equation} \label{eq:12071933}
X(t)=M([0,t]) = \integral{\R}{}{\indikator{[0,t]}{s}}{dM(s)}.
\end{equation}
Here, $g(s)=\indikator{[0,t]}{s}$ is a simple function in the sense of \eqref{eq:11071902} below. Unfortunately, the definition of the stochastic integral in \cite{goldbach} is just valid for simple functions $g \ge 0$ and is therefore more or less limited to considerations as in \eqref{eq:12071933}. However, based on the yields in \cite{stoev}, an extension of \eqref{eq:12071933} could be very interesting accordingly.  \newline
In effect, this paper mainly pursues two goals. One the one hand, we want to stimulate this new field of EVT (also see \cite{fondeville} and \cite{dombry}), which often allows us to recover results from classical EVT, see \cref{15071911} below. On the other hand, the subsequent findings could serve as a helpful tool to solve several problems that have already been discussed in literature. For instance, we think about the study of so-called \textit{$f$-implicit max-infinitely-divisible distributions} (see Definition 2.1.1 in \cite{goldbach}). At the same time and based on the asymptotic theory in \cite{implicit}, it might be tempting to construct implicit max-stable processes using the outcome in this paper.   \newline
It is also mentionable that we will obtain results that, at first glance, are similar to those in \cite{stoev}, where a (classical) max-stable extremal integral has been constructed. Somehow this also means that we have parallels to the notion of \textit{$\alpha$-stable} stochastic integrals, as proposed by \cite{SaTaq94}. However, our techniques are mostly different, since monotonicity arguments do not work any longer and since the $\vee_f$-operation can be rough sometimes. Thus, we believe that these techniques are of independent interest. \newline
The paper is organized as follows: In \cref{section2} we will provide a short review of the underlying concepts in order to understand \eqref{eq:12071933} in more detail. Then, in \cref{section3}, we will expand \eqref{eq:12071933} towards the notion of an (implicit) stochastic integral that, in the end, will be realized as a stochastic limit and that allows every function $g \in  L^{\alpha}_{+}(m)$ to serve as integrand. Finally, \cref{section4} is devoted to present some useful properties and examples, which emphasize the implicit approach of our theory. Nevertheless, they also demonstrate the intimate relation to the results in \cite{stoev}. 
\section{Preliminaries} \label{section2}
In this section we briefly want to introduce some notation and, by the way, recall from \cite{goldbach} and \cite{stoev} what we know so far about (implicit) $\alpha$-Fr\'{e}chet distributions and their corresponding sup-measures. Throughout let $(\Omega, \mathcal{A},\Pro)$ be some underlying probability space on which all occurring random elements are defined. \newline
As usual an $\R$-valued random variable $Z$ is said to be $\alpha$-Fr\'{e}chet distributed, where $\alpha>0$, if 
\begin{equation} \label{eq:17071901}
\Pro(Z \le x)= \begin{cases}   \exp(- \sigma^{\alpha} x^{-\alpha}), & x>0 \\ 0, & x \le 0.   \end{cases}
\end{equation}
Here, $\sigma \ge 0$ is referred to as the \textit{scale} (coefficient) of $Z$ and we write $Z \sim \Phi_{\alpha}(\sigma)$. Note that $\sigma=0$ leads to the point measure in zero, i.e. $\Phi_{\alpha}(0)=\eps_0$ for every $\alpha>0$. In the case $\sigma=1$ we say that $Z$ is \textit{standard} $\alpha$-Fr\'{e}chet distributed, abbreviated by $Z \sim \Phi_{\alpha}$. \\
Although Proposition 3.19 and Theorem 4.2 in \cite{implicit} provide a characterization of implicit max-stable distributions on $\R^d$, there exists no direct counterpart to the \textit{Fisher-Tippett-Gnedenko Theorem} (see \cite{extreme}). However, in a very natural special case, the solutions of \eqref{eq:08071902} lead to the following notion, which is due to Definition 3.1.2 in \cite{goldbach}. Recall that the function $f: \R^d \rightarrow \R_{+}$, fulfilling the properties (i)-(iii) from \cref{section1}, is fixed throughout the paper. 
\begin{defi} \label{23071920}
Fix $\alpha>0$ and let $\kappa$ be a probability measure on $\B(S)$, where $S:=\{f=1\}$ is a compact subset of $\R^d$. Then an $\R^d$-valued random vector $Y$ is said to have an \textit{$f$-implicit $\alpha$-Fr\'{e}chet distribution with scale $\sigma \ge 0$ and angular part $\kappa$} if 
\begin{equation} \label{eq:08071901}
Y \overset{d}{=} \sigma Z \Theta,
\end{equation} where the random variable $Z \sim \Phi_{\alpha}$ is independent of the $S$-valued random vector $\Theta$, being $\kappa$-distributed. We write $Y \sim \Phi_{\alpha, \kappa}^f(\sigma)$ in this case. Moreover, let $\Phi_{\alpha, \kappa}^f:=\Phi_{\alpha, \kappa}^f(1)$ again.
\end{defi}
Observe that \eqref{eq:08071901} is nothing else than $Y \overset{d}{=} Z_{\sigma} \Theta$ with $Z_{\sigma} \sim \Phi_{\alpha}(\sigma)$ and that $f(Y) \sim \Phi_{\alpha}(\sigma)$ in this case. Hence, the denomination \textit{$f$-implicit $\alpha$-Fr\'{e}chet} becomes reasonable. \newline
Now we can proceed with the consideration of independently scattered random measures and sup-measures, respectively. The first named ones have a long history, particularly in the context of $\alpha$-stable or, more generally, infinitely-divisible stochastic integrals and processes. See \cite{paper}, \cite{RajRo89} and \cite{SaTaq94}, just to mention a few. In this case, property $(RM_2)$ of the following definition essentially has to be modified by using the \glqq$+$\grqq-operation. In contrast, when using the \glqq$\vee$\grqq-operation instead (as established in \cite{stoev}), we are dealing with so-called (independently scattered) sup-measures. In the following we will refine this idea according to Definition 3.1.8 in \cite{goldbach}, where the set $L_0^d:=\{X:  \Omega \rightarrow \R^d \, | \, \text{$X$ is random vector}\}$ ($d \ge 1$) is of interest. 
\begin{defi} \label{15071901}
Let $(E, \mathcal{E},m)$ be a $\sigma$-finite measure space with $\mathcal{E}_0:=\{A \in \mathcal{E}: m(A)<\infty\}$. Then, for $f$ as before, a mapping $M^f: \mathcal{E}_0 \rightarrow L_0^d$ is called an \textit{$f$-implicit sup-measure} if the following conditions are fulfilled:
\begin{itemize}
\item[($RM_1$)] For finitely many sets $A_1,...,A_k \in \mathcal{E}_0$ the corresponding random vectors $M^f(A_1),...,M^f(A_k)$ are independent. 
\item[($RM_2$)] For any collection of disjoint sets $A_1,A_2,... \in \mathcal{E}_0$ such that $\cup_{j=1}^{\infty} A_j \in \mathcal{E}_0$ we have that
\begin{equation*}
M^f \left( \cup_{j=1}^{\infty} A_j\right)=\fmax{j=1}{\infty}M^f(A_j)=M^f(A_{j_0}) \quad \text{almost surely},
\end{equation*}
where $j_0$ is a random index.
\end{itemize}
In addition, if $M^f(A)$ has an $f$-implicit $\alpha$-Fr\'{e}chet distribution for every $A \in \mathcal{E}_0$ and some $\alpha>0$, the $f$-implicit sup-measure $M^f$ is said to be $\alpha$-Fr\'{e}chet.
\end{defi}
Of course, the question arises whether non-trivial examples of such sup-measures do exist. A satisfying answer is given by the following statement, which is due to Theorem 3.1.12 in \cite{goldbach}. Note that Definition 3.1.18 in \cite{goldbach} suggests an even more general idea. Yet we will not pursue this one in the sequel.
\begin{prop} \label{11070901}
Fix $\alpha>0$ and an arbitrary probability measure $\kappa$ on $\B(S)$. Then, for every $\sigma$-finite measure space $(E, \mathcal{E},m)$, there exists an $f$-implicit $\alpha$-Fr\'{e}chet sup-measure $M^f: \mathcal{E}_0 \rightarrow L_0^d$ such that $M^f(A) \sim \Phi_{\alpha,\kappa}^f(m(A)^{1/ \alpha})$ for every $A \in \mathcal{E}_0$.
\end{prop}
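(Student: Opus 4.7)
I would construct $M^f$ via a Poisson point process representation. On a suitable probability space $(\Omega, \mathcal{A}, \Pro)$, let $\Pi$ be a Poisson point process on $E \times (0, \infty) \times S$ with mean measure $\nu = m \otimes \alpha r^{-\alpha-1}\,dr \otimes \kappa$; since $\nu$ is $\sigma$-finite, such a $\Pi$ exists by standard theory. For each $A \in \mathcal{E}_0$ set
\[ R_A := \sup\{r : (x, r, \theta) \in \Pi,\ x \in A\}, \]
let $\Theta_A$ denote the $\theta$-coordinate of the almost surely unique point attaining this supremum, and define $M^f(A) := R_A \Theta_A$.

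Next I would identify the marginal law. The number of points of $\Pi$ with $x \in A$ and $r > t$ is Poisson with mean $m(A) t^{-\alpha}$, giving $\Pro(R_A \le t) = \exp(-m(A) t^{-\alpha})$, so $R_A \sim \Phi_\alpha(m(A)^{1/\alpha})$. The marking theorem for Poisson processes then yields $\Theta_A \sim \kappa$ independently of $R_A$, so $M^f(A) \sim \Phi_{\alpha, \kappa}^f(m(A)^{1/\alpha})$ by \cref{23071920}. Property $(RM_1)$ is then immediate: for disjoint $A_1, \ldots, A_k$ the restrictions of $\Pi$ to the strips $A_j \times (0, \infty) \times S$ are independent Poisson processes, so the $M^f(A_j)$ are independent as measurable functionals of disjoint pieces of $\Pi$.

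Property $(RM_2)$ is the delicate step. For a disjoint sequence $(A_j)$ with $\bigcup_j A_j \in \mathcal{E}_0$ we have $\sum_j m(A_j) < \infty$, so for every $\eps > 0$,
\[ \sum_j \Pro(R_{A_j} > \eps) \;=\; \sum_j \bigl(1 - e^{-m(A_j) \eps^{-\alpha}}\bigr) \;\le\; \eps^{-\alpha} \sum_j m(A_j) \;<\; \infty, \]
and Borel--Cantelli ensures that almost surely only finitely many $R_{A_j}$ exceed $\eps$. Consequently $\sup_j R_{A_j}$ is attained at a unique random index $j_0$, which coincides with the index of the globally maximising point of $\Pi$ over $\bigcup_j A_j$. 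Because $f(M^f(A_j)) = R_{A_j}$ by $1$-homogeneity of $f$ together with $f(\Theta_{A_j}) = 1$, associativity of $\vee_f$ forces $\fmax{j=1}{k} M^f(A_j)$ to stabilise at $M^f(A_{j_0}) = M^f(\bigcup_j A_j)$ almost surely once $k$ is large enough, which is exactly the required identity.

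The main obstacle is precisely this extension from the inductively defined finite $\vee_f$ to a meaningful infinite $\vee_f$: since $\vee_f$ is not commutative one cannot simply reorder the sequence, and it is the Borel--Cantelli reduction that makes the finite partial maxima eventually constant and thus reduces the infinite operation to a finite one almost surely. A subsidiary technical point, handled by routine measurability arguments for functionals of Poisson processes, is to verify that $A \mapsto M^f(A)$ indeed takes values in $L_0^d$.
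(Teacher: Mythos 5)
Your construction is correct, and it is essentially the canonical one: the paper itself gives no proof of this proposition (it is quoted from Theorem 3.1.12 of \cite{goldbach}), but the sup-measures of \cite{stoev} that it extends are built exactly by this de Haan-type Poisson representation, with the angular mark $\theta\sim\kappa$ added as an independent marking. The marginal computation, the product-form independence argument for $(RM_1)$ (read, as it must be, for \emph{disjoint} $A_1,\dots,A_k$), and the Borel--Cantelli reduction of the infinite $\vee_f$ to an eventually constant finite one are all sound; the last point is indeed the only place where the non-commutativity of $\vee_f$ could cause trouble, and you defuse it correctly by noting that $f(M^f(A_j))=R_{A_j}$ and that the partial maxima stabilise at the global argmax index.

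Three small points you should tighten. First, when $m(A)=0$ the strip $A\times(0,\infty)\times S$ is a.s.\ empty, so you must set $R_A:=0$ and $M^f(A):=0$ by convention; this is consistent with $\Phi_{\alpha,\kappa}^f(0)=\eps_0$. Second, the a.s.\ uniqueness of the maximising point (needed both for $\Theta_A$ to be well defined and to rule out ties in $\vee_f$, whose tie-breaking is order-dependent) deserves a sentence: above any level $t>0$ there are a.s.\ finitely many points, and their $r$-coordinates are a.s.\ pairwise distinct because the intensity $\alpha r^{-\alpha-1}\,dr$ is non-atomic. Third, in the $(RM_2)$ step the Borel--Cantelli argument must be run along $\eps=1/n$, $n\in\N$, and the resulting null sets united, so that on a single event of full probability only finitely many $R_{A_j}$ exceed every positive level; combined with $R_{\cup_j A_j}>0$ a.s.\ (when $m(\cup_j A_j)>0$) this gives the attainment of the supremum.
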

From now on and by a little abuse of notation, we will neglect the fact that the sup-measure, which is provided by \cref{11070901}, depends on $f, \alpha, \kappa$ and $m$. Hence, we merely abbreviate this sup-measure by $M$. Then a function $g:E \rightarrow \R_{+}=[0,\infty)$ is called \textit{simple (with respect to $\mathcal{E}_0$)} if, for some $k \in \N$, there exist $\alpha_1,...,\alpha_k \ge 0$ and disjoint sets $A_1,...,A_k \in \mathcal{E}_0$ such that the representation 
\begin{equation} \label{eq:11071902}
g(s)=\summe{j=1}{k} \alpha_j \indikator{A_j}{s} , \quad s \in E
\end{equation}
holds true. Certainly, such a representation is not unique. However, we get the following. 
\begin{defi} \label{12071910}
Let $g:E \rightarrow \R_+$ be simple and assume that a corresponding representation for $g$ is given by \eqref{eq:11071902}. Then the $\R^d$-valued random vector
\begin{equation} \label{eq:17071910}
I_{M}^{\vee_f}(g):=I(g):= \int_E^{\vee_f} g(s) \, dM(s) :=\fmax{j=1}{k} \alpha_j M(A_j)
\end{equation}
is uniquely determined by $g$ a.s. (see Proposition 3.2.3 in \cite{goldbach}), which means that $I(g)$ does not depend on a particular representation for $g$. We call $I(g)$ the \textit{($f$-implicit extremal) integral of $g$ (with respect to $M$)}. 
\end{defi}
Several properties of the ($f$-implicit extremal) integral $I(g)$ for simple functions $g$ can be found in Proposition 3.2.4 of \cite{goldbach}. Later, in the context of \cref{09051933}, we will study them into more detail. For the moment, it suffices just to mention the following one:
\begin{equation} \label{eq:29071939}
I(g) \sim \Phi_{\alpha, \kappa}^f (\norm{g}_{\alpha} ), \quad \text{where} \quad  \norm{g}_{\alpha} = \left( \integral{E}{}{g(s)^{\alpha}}{dm(s)}  \right)^{1/ \alpha}
\end{equation}
according to \eqref{eq:29071988}. Finally, Corollary 3.2.5 in \cite{goldbach} shows that $I(g)=I(\tilde{g})$ almost surely (a.s.), provided that $g$ and $\tilde{g}$ coincide $m$-almost everywhere (a.e.). This is one of the reasons why $m$ is often referred to as the \textit{control measure} of $M$.
\begin{remark} \label{23071901}
Recall that \cite{stoev} introduces an extremal integral for deterministic functions $g \ge 0$ with respect to certain $\alpha$-Fr\'{echet} sup-measures, which leads to $\R_{+}$-valued random variables. We omit the details. However, using Corollary 3.1.16 in \cite{goldbach} and the fact that
\begin{equation}  \label{eq:29071993}
f(x_1 \vee_f x_2)=f(x_1) \vee f(x_2) \quad \text{for any $x_1,x_2 \in \R^d$},
\end{equation}
it is easy to check that the random variable $f(I(g))$ equals the corresponding extremal integral in \cite{stoev}, provided that the integrand $g \ge 0$ is a simple function.
\end{remark}  
\section{Extension of the integral} \label{section3}
Recall that the sup-measure $M$ as well as the underlying ingredients $f, \alpha, \kappa$ and $m$ are fixed throughout. We want to start with a definition that appears not only general, but also natural in our context. In addition, it follows former examples in literature, which also deal with stochastic integrals (see \cite{paper}, \cite{RajRo89}, \cite{SaTaq94} and \cite{stoev} again). However, we will restrict ourselves to the consideration of $\R_{+}$-valued functions.
\begin{defi} \label{08051911}
A measurable function $g:E \rightarrow \R_{+}$ is called \textit{integrable with respect to $M$} (shortly: \textit{$M$-integrable}) if there exists a sequence of simple functions $(g_n)_n$ fulfilling $g_n \uparrow g$ such that the sequence $(I(g_n))_{n}$ converges in probability (on $\R^d$). Here, $g_n \uparrow g$ means that $g_n(s) \le g_{n+1}(s)$ for every $ n \in \N$ and $s \in E$, while $\sup_{n \in \N} g_n(s)= \lim_{n \rightarrow \infty} g_n(s)=g(s)$. \newline 
Finally, let $\mathcal{I}(M)$ denote the set of all functions $g:E \rightarrow \R_+$ that are $M$-integrable.
\end{defi}
The main aim of this section is to answer two questions that immediately arise from the previous definition:
\begin{itemize}
\item[(1)] Which functions belong to $\mathcal{I}(M)$?
\item[(2)] Given a function $g \in \mathcal{I}(M)$. Does the stochastic limit $I(g):=\Pro \text{-} \lim_{n \rightarrow \infty} I(g_n)$ depend on the choice of $(g_n)$? \newline
And if not, what are the properties of $I(g)$? (This will be the subject of \cref{section4}.)
\end{itemize}
We have seen in \cref{12071910} that the integral for simple functions essentially uses the $\vee_f$-operation. However, the pursued extension will also benefit from an operation that is quite related and that we will introduce now.
\begin{defi} \label{04041901}
For $k \ge 2$ and $x_1,...,x_k \in \R^d$ arbitrary let $j_0 \in \{1,...,n\}$ be the index such that $x_1 \vee_f \cdots \vee_f x_k=x_{j_0}$. Then we define
\begin{equation*}
\fmaxzwei{j=1}{k} x_j :=  \fmax{1 \le j \ne j_0 \le k}{} x_j. 
\end{equation*}
\end{defi}
The following observation combines both operations from a probabilistic point of view and, at the same time, reveals aspects from classical EVT. Therefore, recall \eqref{eq:17071901}. 
\begin{lemma} \label{03041905}
Assume that $X_1,...,X_k$ are $\R^d$-valued and independent random vectors, where $f(X_j) \sim \Phi_{\alpha}(\sigma_j)$ with scale $\sigma_j \ge 0$ for $j=1,...,k$. Then we have the following:
\begin{equation*}
\forall \, 0<\gamma <1: \quad \Pro \left(  f \left(\fmax{j=1}{k} X_j  \right)\le (1+\gamma) \, f \left( \fmaxzwei{j=1}{k} X_j  \right) \right) \le 1- (1+ \gamma)^{- \alpha}.
\end{equation*}
\end{lemma}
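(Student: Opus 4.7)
The plan is to reduce the lemma to a one-dimensional statement about the top two order statistics of independent Fréchet variables and then to bound an explicit probability by an elementary inequality.

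First, I would apply \eqref{eq:29071993} together with associativity of $\vee_f$ to get $f(\fmax{j=1}{k} X_j) = \max_{j=1,\ldots,k} f(X_j)$. For the $\vee_f^*$-part, the random index $j_0$ in \cref{04041901} is almost surely the unique argmax of $j \mapsto f(X_j)$: the $f(X_j)$'s with $\sigma_j > 0$ are continuously distributed and jointly independent, so a.s.\ distinct; any $\sigma_j = 0$ forces $X_j = 0$ a.s.\ by property (ii) of $f$ and contributes nothing. Consequently $f(\fmaxzwei{j=1}{k} X_j) = \max_{j \ne j_0} f(X_j)$ equals, almost surely, the second-largest order statistic of $(f(X_1), \ldots, f(X_k))$. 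Writing $Y_j := f(X_j) \sim \Phi_\alpha(\sigma_j)$, $M_k := \max_j Y_j$ and $M_k^* := \max_{j \ne j_0} Y_j$, the claim is equivalent to $\Pro(M_k > (1+\gamma) M_k^*) \ge (1+\gamma)^{-\alpha}$. It suffices to treat $S := \sum_{i=1}^k \sigma_i^\alpha > 0$, since otherwise all $Y_j$ vanish a.s.\ and the statement is trivial.

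Because $1+\gamma > 1$, the events $A_j := \{Y_j > (1+\gamma)\max_{i \ne j} Y_i\}$ are pairwise disjoint with union $\{M_k > (1+\gamma) M_k^*\}$, so $\Pro(M_k > (1+\gamma) M_k^*) = \summe{j=1}{k}\Pro(A_j)$. Each $\Pro(A_j)$ I would then compute using three standard Fréchet facts: (i) the maximum of finitely many independent Fréchets is Fréchet with $\alpha$-th-power scales adding, yielding $\max_{i \ne j} Y_i \sim \Phi_\alpha((S-\sigma_j^\alpha)^{1/\alpha})$; (ii) scalar multiplication rescales a Fréchet, so $(1+\gamma)\max_{i \ne j}Y_i \sim \Phi_\alpha((1+\gamma)(S-\sigma_j^\alpha)^{1/\alpha})$; (iii) a one-line integral shows $\Pro(A > B) = a^\alpha/(a^\alpha + b^\alpha)$ for independent $A \sim \Phi_\alpha(a)$ and $B \sim \Phi_\alpha(b)$. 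Combined, these yield
\begin{equation*}
\Pro(A_j) = \frac{\sigma_j^\alpha}{\sigma_j^\alpha + (1+\gamma)^\alpha(S - \sigma_j^\alpha)}.
\end{equation*}
The elementary bound $\sigma_j^\alpha + (1+\gamma)^\alpha(S - \sigma_j^\alpha) \le (1+\gamma)^\alpha S$ then gives $\Pro(A_j) \ge (1+\gamma)^{-\alpha} \sigma_j^\alpha / S$, and summing over $j$ delivers exactly the desired bound $(1+\gamma)^{-\alpha}$.

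The step I expect to require the most care is the argmax bookkeeping: verifying that the tie-breaking convention in \eqref{eq:25071981} produces an almost surely unique $j_0$ and that $f(\fmaxzwei{j=1}{k} X_j)$ really coincides with the second order statistic of $f(X_1), \ldots, f(X_k)$. Once that identification and the separate trivial handling of degenerate scales $\sigma_j = 0$ are in place, the remainder reduces to the short Fréchet computation and the one-step inequality on the denominator described above.
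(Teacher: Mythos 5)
Your proof is correct and follows essentially the same route as the paper: both decompose according to which index attains the maximum of $f(X_1),\dots,f(X_k)$ and reduce each piece to an explicit comparison probability for two independent Fr\'echet variables. The only difference is that you bound the complementary event $\{M_k>(1+\gamma)M_k^{*}\}$ from below via a genuinely disjoint union, whereas the paper bounds the event itself from above by sub-additivity over the sets $\{Y^{(i)}\le f(X_i)\le(1+\gamma)Y^{(i)}\}$; the two computations are equivalent.
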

\begin{proof}
Without loss of generality we can assume that $\sigma_j>0$ for $j=1,...,k$. To start with a general observation, let $Y_1$ and $Y_2$ be independent random variables, where $Y_i \sim \Phi_{\alpha} (\sigma^{(i)})$ with scale $\sigma^{(i)} > 0$ for $i=1,2$. Fix $0<\gamma<1$. Then a standard calculation, using the substitution $1-\hat{\gamma}=(1+\gamma)^{- \alpha}$,  shows that
\begin{align}
\Pro(Y_1\le Y_2 \le (1+ \gamma) Y_1 ) &=\integral{0}{\infty}{ [ e^{- \sigma^{(2)} (1+\gamma)^{-\alpha} x^{-\alpha} } - e^{- \sigma^{(2)} x^{-\alpha} } ] \alpha \sigma^{(1)} x^{-\alpha -1} e^{- \sigma^{(1)}  x^{- \alpha}} }{dx} \notag \\
&=  \frac{\sigma^{(1)}}{\sigma^{(1)}  + (1+\gamma)^{- \alpha} \sigma^{(2)} } - \frac{\sigma^{(1)}}{\sigma^{(1)}+\sigma^{(2)}} \notag \\
&= \frac{ \hat{\gamma} \, \sigma^{(2)}  }{\sigma^{(1)} + \sigma^{(2)}  } \times  \frac{\sigma^{(1)} }{ \sigma^{(1)} + (1- \hat{\gamma}  )\sigma^{(2)}}  \notag \\
& \le  \frac{ \hat{\gamma} \, \sigma^{(2)}  }{\sigma^{(1)} + \sigma^{(2)}  } . \label{eq:03041907}
\end{align}
Now let $Y^{(i)}= \max_{1 \le j \ne i \le k} f(X_j)$ for $1 \le i \le k$. Then, by independence, we see on the one hand that $Y^{(i)}$ is independent from $f(X_i)$. On the other hand, $Y^{(i)}$ is also $\alpha$-Fr\'{e}chet distributed with scale $\sum_{1 \le j \ne i \le k} \sigma_j$. Moreover, note that 
\begin{equation} \label{eq:17071922}
 \left \{ f \left(\fmax{j=1}{k} X_j  \right)\le (1+\gamma) \, f \left( \fmaxzwei{j=1}{k} X_j \right) \right \}  = \bigcup_{i=1}^{k} \{Y^{(i)} \le f(X_i) \le (1+\gamma) Y^{(i)}  \}
\end{equation}
holds true. Then, for every $i \in \{1,..,k \}$, we can apply \eqref{eq:03041907} to $Y_1=Y^{(i)}$ and $Y_2=f(X_i)$, respectively. Using \eqref{eq:17071922}, this gives the assertion, since
\begin{equation*}
 \sum \limits_{i=1}^{k} \frac{ \hat{\gamma} \, \sigma_i  }{ \sum_{1 \le j \ne i \le k} \sigma_j + \sigma_i  } = \hat{\gamma}= 1- (1+ \gamma)^{- \alpha}.
\end{equation*}
\end{proof}
Remember that \eqref{eq:17071910} does not depend on a particular representation for $g$. However, a similar approach using the operation from \cref{04041901} would lead to serious problems. In order to overcome them, we need some additional notation.
\begin{defi}  \label{25041901}
\begin{itemize}
\item[(a)] Let $g$ be a simple function with representation $g=\summezwei{j \in I}{} \alpha_j \indikatorzwei{A_j}$ for some finite index set $I$, where $A_j \in \mathcal{E}_0$ are disjoint and where $\alpha_j \ge 0$ for every $j \in I$. Although we do not claim that $\cup_{j \in I} A_j=E$, we call $\{A_j: j \in I \}$ a \textit{partition} (of $g$) in this case and write $g \sim (A_j,\alpha_j)_{j \in I}$. 
\item[(b)] Assume that $\mathcal{P}_1$ and $\mathcal{P}_2$ are two partitions. Then we write $\mathcal{P}_1 \le \mathcal{P}_2$ if the following holds true: Any set from $\mathcal{P}_1$ can be represented by an appropriate union over sets belonging to $\mathcal{P}_2$.
\item[(c)] Consider a sequence $(g_n)$ of simple functions. Then a corresponding sequence of representations
\begin{equation} \label{eq:03041901}
g_n(s)=\summe{j=1}{k_n} \alpha_j^{(n)} \indikator{A_j^{(n)}}{s}, \quad s \in E
\end{equation}
is called \textit{consistent} if $\mathcal{P}_n \le \mathcal{P}_{n+1}$, where $\mathcal{P}_n:=\{A_1^{(n)},...,A_{k_n}^{(n)}\}$ for every $n \in \N$.
\end{itemize}
\end{defi}
Note that the following remark, in particular part (b), fixes the problem that we addressed above \cref{25041901}. However, its proofs are easy and therefore left to the reader.
\begin{remark} \label{03041902}
\begin{itemize}
\item[(a)] Suppose that $g_1,g_2$ are simple functions that can be represented by $g_i \sim (A_j^{(i)}, \alpha_j^{(i)})_{j=1,...,k_i}$ with $\mathcal{P}_i:=\{A_1^{(i)},...,A_{k_i}^{(i)} \}$ for $i=1,2$. Then we can always find a \textit{common partition} $\mathcal{P}$, which fulfills $\mathcal{P}_1, \mathcal{P}_2 \le \mathcal{P}$. More precisely, define
\begin{equation*}
A_0^{(1)}:=\bigcup_{j=1}^{k_2} A_j^{(2)} \setminus  \bigcup_{j=1}^{k_1} A_j^{(1)} , \quad  A_0^{(2)}:=\bigcup_{j=1}^{k_1} A_j^{(1)} \setminus  \bigcup_{j=1}^{k_2} A_j^{(2)}
\end{equation*}
and let $\alpha_0^{(1)}=\alpha_0^{(2)}=0$. Hence, we observe that
\begin{equation*}
\mathcal{P}= \{A_{j_1}^{(1)} \cap A_{j_2}^{(2)} : 0 \le j_1 \le k_1, 0 \le j_2 \le k_2\}
\end{equation*}
has the desired properties, since we can write
\begin{equation*}
g_i(s)= \summe{j_1=0}{k_1} \summe{j_2=0}{k_2} \alpha_{j_i}^{(i)} \indikator{A_{j_1}^{(1)} \cap A_{j_2}^{(2)}}{s}   \quad \text{for every $s \in E$ and $i=1,2$.}
\end{equation*}
\item[(b)] Consider a sequence $(g_n)$ of simple functions and assume that a consistent sequence of representations is given by \eqref{eq:03041901}, which is always possible due to part (a). Moreover, assume that $g_n \uparrow$, which means that the sequence $(g_n)$ itself is increasing. Then, using $(RM_2), $ it follows for every $n \in \N$ that
\begin{equation*}
f \left(  \fmaxzwei{j=1}{k_n} \alpha_j^{(n)} M(A_j^{(n)}) \right) \le  f \left(  \fmaxzwei{j=1}{k_{n+1}} \alpha_j^{(n+1)} M(A_j^{(n+1)}) \right) \quad \text{a.s.}
\end{equation*}
\end{itemize}
\end{remark}
Equipped with the previous observations, we are now able to enhance the idea of \cref{03041905}. Note that, for $\R$-valued functions $g,g_1,g_2,...$ on $E$, we shortly write $g_n \le g$, provided that $g_n(s) \le g(s)$ holds true for every $n \in \N$ and $s \in E$. Also recall \eqref{eq:29071988}.
\begin{prop} \label{25041910}
Let $(g_{n})$ be a sequence of simple functions fulfilling $g_n \le g$ for some $g \in L^{\alpha}_{+}(m)$ and assume that a consistent sequence of representations is given by \eqref{eq:03041901}. Define
\begin{equation*}
X_n:= I(g_n)=\fmax{j=1}{k_n} \alpha_j^{(n)} M(A_j^{(n)}) \quad \text{and}  \quad  X_n^{*}:=\fmaxzwei{j=1}{k_n} \alpha_j^{(n)} M(A_j^{(n)}).
\end{equation*}
Moreover, assume that there exist further sequences $(h_{1,n}), (h_{2,n})$ of simple functions such that $h_{1,n} \le g_n \le h_{2,n}$ and $h_{i,n} \uparrow g$ for $i=1,2$ as $n \rightarrow \infty$. Then, for any $\eps>0$, there exist a set $A \in \mathcal{A}$ with $\Pro(A) \ge 1-Ê\eps$ as well as some $\delta>0$ and $N \in \N$ such that we have
\begin{equation} \label{eq:18071933}
 f(X_n) (\omega) \ge (1+\delta) f(X_n^{*}) (\omega) \quad \text{for every $n \ge N$ and $\omega \in A$.}
 \end{equation}
\end{prop}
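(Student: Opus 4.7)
I plan to sandwich $(f(X_n))_n$ and $(f(X_n^{*}))_n$ between the corresponding objects for $h_{1,n}$ and $h_{2,n}$, pass to common almost sure limits using the monotonicity in \cref{03041902}(b), and then use \cref{03041905} together with a Fatou argument to extract a strict gap between the limiting ``max'' and the limiting dominating ``second-max''. Using \cref{03041902}(a) I first refine all partitions at level $n$ to a common partition containing $\mathcal{P}_n$ and the natural partitions of $h_{1,n},h_{2,n}$, and then refine consistently across $n$; setting $X_n^{(i)}:=I(h_{i,n})$ and letting $X_n^{*(i)}$ be the $\vee_f^{*}$-sum for $h_{i,n}$ on this refined partition, the identity $f(x_1\vee_f x_2)=f(x_1)\vee f(x_2)$, the monotonicity of the max and second-max of finitely many nonnegative scalars in each argument, and the fact that refining a partition can only enlarge the second-max, together give
\begin{equation*}
 f(X_n^{(1)})\le f(X_n)\le f(X_n^{(2)}) \quad \text{and} \quad f(X_n^{*})\le f(X_n^{*(2)}).
\end{equation*}

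\textbf{Common limit $Y$.} By \cref{03041902}(b) each of $(f(X_n^{(1)}))_n$, $(f(X_n^{(2)}))_n$ and $(f(X_n^{*(2)}))_n$ is almost surely nondecreasing, hence convergent a.s. Since $f(X_n^{(i)})\sim \Phi_\alpha(\norm{h_{i,n}}_\alpha)$ with $\norm{h_{i,n}}_\alpha\to \norm{g}_\alpha$ by monotone convergence in $L^\alpha(m)$, the two a.s.\ limits $Y^{(1)}\le Y^{(2)}$ share the distribution $\Phi_\alpha(\norm{g}_\alpha)$ and therefore coincide a.s.; call this common limit $Y$, so $f(X_n)\to Y$ a.s.\ and $Y>0$ a.s.\ (the case $\norm{g}_\alpha=0$ is trivial). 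Write $Y^{*(2)}:=\lim_n f(X_n^{*(2)})$; this is a.s.\ finite because $Y^{*(2)}\le Y<\infty$ a.s.

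\textbf{The strict gap $Y>Y^{*(2)}$ a.s.} For every fixed $\gamma\in(0,1)$, \cref{03041905} applied to the partition of $h_{2,n}$ yields
\begin{equation*}
 \Pro\bigl(f(X_n^{(2)})\le (1+\gamma)\, f(X_n^{*(2)})\bigr)\le 1-(1+\gamma)^{-\alpha} \quad \text{for every } n\in \N.
\end{equation*}
The a.s.\ finite monotone convergences $f(X_n^{(2)})\uparrow Y$ and $f(X_n^{*(2)})\uparrow Y^{*(2)}$ force $f(X_n^{(2)})\le (1+\gamma)f(X_n^{*(2)})$ for all $n$ large whenever $Y<(1+\gamma)Y^{*(2)}$, so this event lies in $\liminf_n\{f(X_n^{(2)})\le (1+\gamma)f(X_n^{*(2)})\}$. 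Fatou's lemma for events then gives $\Pro(Y<(1+\gamma)Y^{*(2)})\le 1-(1+\gamma)^{-\alpha}$. Since the family $\{Y<(1+\gamma)Y^{*(2)}\}$ decreases to $\{Y\le Y^{*(2)}\}$ as $\gamma\downarrow 0$, continuity of $\Pro$ yields $\Pro(Y\le Y^{*(2)})=0$.

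\textbf{Uniformization.} Fix $\eps>0$. By the previous step and continuity of $\Pro$, choose $\delta_0>0$ with $\Pro(Y\ge (1+2\delta_0)Y^{*(2)})\ge 1-\eps/2$, and set $\eta:=\delta_0/(1+2\delta_0)$, so that $(1-\eta)(1+2\delta_0)=1+\delta_0$. Egorov applied to the a.s.\ convergence $f(X_n^{(1)})/Y\to 1$ (well defined since $Y>0$ a.s.) produces $A_1\in\mathcal{A}$ with $\Pro(A_1)\ge 1-\eps/2$ and $N\in\N$ such that $f(X_n^{(1)})\ge (1-\eta)Y$ on $A_1$ for every $n\ge N$. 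Putting $A:=A_1\cap\{Y\ge (1+2\delta_0)Y^{*(2)}\}$, one has $\Pro(A)\ge 1-\eps$ and, for every $\omega\in A$ and $n\ge N$,
\begin{equation*}
 f(X_n)\ge f(X_n^{(1)})\ge (1-\eta)Y\ge (1-\eta)(1+2\delta_0)Y^{*(2)}\ge (1+\delta_0)\, f(X_n^{*(2)})\ge (1+\delta_0)\, f(X_n^{*}),
\end{equation*}
which is \eqref{eq:18071933} with $\delta:=\delta_0$. The hardest step is the strict gap $Y>Y^{*(2)}$ a.s.: a single application of \cref{03041905} only bounds the bad event at a fixed $n$ and leaves a non-vanishing failure probability $1-(1+\gamma)^{-\alpha}$, so the uniformity in $n$ has to be manufactured by coupling through consistent refinements in \cref{03041902}(b) to produce monotone a.s.\ limits, after which the Fatou sandwich with $\gamma\downarrow 0$ finally collapses the failure probability to zero.
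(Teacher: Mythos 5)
Your proof is correct and follows essentially the same route as the paper's: sandwich $f(X_n)$ and $f(X_n^{*})$ between the corresponding quantities for $h_{1,n}$ and a dominating simple function compatible with $\mathcal{P}_n$, identify the common monotone a.s.\ limit $Y$, invoke \cref{03041905} to force a strict gap between $Y$ and the limit of the second maxima, and uniformize via Egorov. The only differences are cosmetic: the paper keeps the partition $\mathcal{P}_n$ and takes $\beta_j^{(n)}:=\min_{A_j^{(n)}}h_{2,n}$ instead of your common refinement (thereby avoiding the observation, which you use correctly but do not justify, that refining a partition can only enlarge the second maximum), and it establishes the strict gap by contradiction rather than by your direct Fatou/continuity-from-above argument.
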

\begin{proof}
Obviously, we can always assume that $\emptyset$ is not an element of the occurring partitions. This allows to define $\beta_j^{(n)}:=\min \{h_{2,n}(s): s \in A_j^{(n)}\}$ and in view of $g_n \le h_{2,n}$ we obtain that $\alpha_j^{(n)} \le \beta_j^{(n)}$ for every $n \in \N$ and $1 \le j \le k_n$. If we let $h_n \sim (A_j^{(n)},\beta_j^{(n)})_{j=1,...,,k_n}$ together with
\begin{equation*}
 Y_n:=I(h_n)=\fmax{j=1}{k_n} \beta_j^{(n)} M(A_j^{(n)}) \quad \text{and} \quad Y_n^{*}:=\fmaxzwei{j=1}{k_n} \beta_j^{(n)} M(A_j^{(n)}),
\end{equation*}
it follows for every $n \in \N$ that $f(X_n^{*}) \le f(Y_n^{*}) $ a.s. Also note that the sequence $(h_n)$ is increasing, since the same holds true for $(h_{2,n})$ by assumption. In particular, we deduce that $f(Y_n^{*})$ is increasing due to part (b) of \cref{03041902}. Let $Y^{*}:= \sup_{n \in \N} f(Y_n^{*})$ and verify that $h_{1,n} \le g_n \le h_n \le h_{2,n}$. Then Proposition 3.2.4 (together with (1.3.2)) in \cite{goldbach} states that 
\begin{equation} \label{eq:12071901}
\forall n \in \N: \quad f(I(h_{1,n})) \le f(X_n) \le f(Y_n) \le f(I(h_{2,n})) \quad \text{a.s.}
\end{equation}
However, \cref{23071901} and Proposition 2.7 in \cite{stoev} imply that the increasing sequences $f(I(h_{1,n}))$ and $f(I(h_{2,n}))$ have the same limit a.s., say $Y$. It follows that $f(Y_n) \rightarrow Y$ a.s. Also note that $Y \ge Y^{*}$ and that $0<Y<\infty$ a.s., provided that $\norm{g}_{\alpha}>0$ (otherwise we conclude that $g_n=0$ $m$-a.e. and \eqref{eq:18071933} is true anyway).  \newline
The next step is to prove that $Y-Y^{*}>0$ holds true a.s. Conversely, assume that there exists a set $B \in \mathcal{A}$ with $p:=\Pro(B) >0$ and $Y(\omega)/Y^{*}(\omega)=1$ for every $\omega \in B$. Then we obtain that $(f(Y_n)/f(Y_n^{*})-1) \indikatorzwei{B} \rightarrow 0$ a.s. (and particularly in probability). Hence, for $\gamma, \gamma'>0$ arbitrary, it follows that
\begin{equation*}
 \Pro \left(  (f(Y_n)/f(Y_n^{*})-1) \indikatorzwei{B} \le \gamma \right) \ge 1 - \gamma' \quad \text{for almost all $n$,}
\end{equation*}
which also implies that $\Pro( f(Y_n)  \le (1 + \gamma) f(Y_n^{*}) ) \ge p -\gamma'$ for those $n$. Observe that this gives a contradiction to \cref{03041905}, when choosing $0<\gamma'<p+ (1+\gamma)^{-\alpha}-1$, which is always possible as long as we have that $p=1$ or $0<\gamma<(1-p)^{-1 / \alpha} - 1$, respectively.   \newline
Fix $\eps>0$. By what we have just seen there exist some $0< \delta'<1$ and a set $A_1 \in \mathcal{A}$ with $\Pro(A_1) \ge 1- \eps/2$, fulfilling the relation $Y\ge (1+ \delta') Y^{*} $ on $A_1$. In a similar way and using that $f(I(h_{1,n})) \uparrow Y$ a.s. (see above), we obtain some $N \in \N$ and a further set $A_2 \in \mathcal{A}$ with $\Pro(A_2) \ge 1- \eps/2$ and such that $f(I(h_{1,n}))(\omega) /Y(\omega) \ge 1- \delta' / 2$ holds true for every $\omega \in A_2$ and $n \ge N$. Let $A=A_1 \cap A_2$ and observe that $\Pro(A) \ge 1- \eps$. Finally, recall \eqref{eq:12071901} and that $f(X_n^{*}) \le f(Y_n^{*}) \le Y^{*}$. Then, for $n \ge N$, the following computation is valid on $A$, where we can assume that $f(X_n^{*})>0$ (else \eqref{eq:18071933} is true anyway again):
\begin{equation*}
\frac{f(X_n)}{f(X_n^{*})} \ge \frac{f(I(h_{1,n})) }{Y^{*}} = \frac{Y}{Y^{*}}  \cdot \frac{f(I(h_{1,n})) }{Y} \ge (1+\delta')(1-\delta' /2).
\end{equation*}
Letting $\delta:=(1+\delta')(1-\delta' /2) -1 >0$ for instance, this gives the assertion.
\end{proof}
Roughly speaking, a reformulation of \cref{25041910} states that we observe \textit{gaps} behind the attained maxima, which appear with a demanded probability and where the size of these gaps depends on the given probability. We will now try to benefit from these gaps and, therefore, handle some of the troubles that can be caused by the $\vee_f$-operation. Recall the set $S$ from \cref{23071920}.
\begin{lemma} \label{06051901}
Consider $\alpha_j, \beta_j>0$ and $x_j \in \R^d $ for $j=1,...,k$ and some $k \in \N$, where $\gamma:=\min\{\alpha_1,...,\alpha_k, \beta_1,....,\beta_k\}$ and $\rho:= \max \{|\alpha_j - \beta_j|: j=1,...,k\}$. Let 
\begin{equation*}
\zeta:= \fmax{j=1}{k} \alpha_j x_j \quad \text{and} \quad \zeta^{*}:= \fmaxzwei{j=1}{k} \alpha_j x_j.
\end{equation*}
Furthermore, assume that there exists some $\delta>0$ such that we have $f(\zeta) \ge (1+ \delta) f(\zeta^{*})$. Then, provided that $\rho < \gamma(\sqrt{1+ \delta}-1) $, the following relation holds true:
\begin{equation*}
\bignorm{\fmax{j=1}{k} \alpha_j x_j - \fmax{j=1}{k} \beta_j x_j} \le C \, \rho \, \max \limits_{j=1,...,k} f(x_j),
\end{equation*}
where $\norm{\cdot}$ denotes the Euclidean norm on $\R^d$ and where $C:= \max \{ \norm{x} : \text{$x \in S$}  \}$.
\end{lemma}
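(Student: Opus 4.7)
My plan is to show that, under the given hypothesis, the $\vee_f$-maximizing index is preserved when the coefficients $(\alpha_j)$ are replaced by $(\beta_j)$; the vector difference then collapses to a single rescaled vector, and the norm bound follows immediately from $1$-homogeneity of $f$. First I set up notation: let $j_0 \in \{1, \ldots, k\}$ be the index realizing $\zeta = \alpha_{j_0} x_{j_0}$, so $\alpha_{j_0} f(x_{j_0}) = \max_j \alpha_j f(x_j)$ and $f(\zeta^*) = \max_{j \ne j_0} \alpha_j f(x_j)$. Since $\delta > 0$, the gap condition $f(\zeta) \ge (1+\delta) f(\zeta^*)$ forces $\alpha_{j_0} f(x_{j_0}) > \alpha_j f(x_j)$ strictly for every $j \ne j_0$, identifying $j_0$ uniquely whenever some $x_j \ne 0$. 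The case $\max_j f(x_j) = 0$ forces all $x_j = 0$ by property (ii) of $f$, making the claim trivial; so I assume $f(x_{j_0}) > 0$.

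The heart of the argument is to prove that $\fmax{j=1}{k}\beta_j x_j = \beta_{j_0} x_{j_0}$, that is, that $\beta_{j_0} f(x_{j_0}) > \beta_j f(x_j)$ for every $j \ne j_0$. The key input is the pair of multiplicative estimates
\begin{equation*}
\frac{\beta_i}{\alpha_i} \ge \frac{\gamma}{\gamma+\rho} \quad \text{and} \quad \frac{\alpha_i}{\beta_i} \ge \frac{\gamma}{\gamma+\rho} \qquad \text{for every } i,
\end{equation*}
which simultaneously exploit $\alpha_i \le \beta_i + \rho$ (respectively $\beta_i \le \alpha_i + \rho$) and the joint lower bound $\alpha_i, \beta_i \ge \gamma$: since $x \mapsto x/(x+\rho)$ is increasing on $[0, \infty)$, one has $\beta_i/\alpha_i \ge \beta_i/(\beta_i + \rho) \ge \gamma/(\gamma+\rho)$, and the symmetric estimate follows analogously. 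Applying these at $i = j_0$ and $i = j$, and combining with the gap ratio $\alpha_{j_0} f(x_{j_0})/(\alpha_j f(x_j)) \ge 1+\delta$, one obtains
\begin{equation*}
\frac{\beta_{j_0} f(x_{j_0})}{\beta_j f(x_j)} = \frac{\beta_{j_0}}{\alpha_{j_0}} \cdot \frac{\alpha_{j_0} f(x_{j_0})}{\alpha_j f(x_j)} \cdot \frac{\alpha_j}{\beta_j} \ge (1+\delta) \left( \frac{\gamma}{\gamma+\rho} \right)^2.
\end{equation*}
This strictly exceeds $1$ exactly when $(1+\delta)\gamma^2 > (\gamma+\rho)^2$, equivalently $\rho < \gamma(\sqrt{1+\delta}-1)$, which is precisely the hypothesis. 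Hence $j_0$ remains the $\vee_f$-maximizing index for $(\beta_j x_j)_j$.

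With the preserved maximizing index in hand, the difference collapses:
\begin{equation*}
\fmax{j=1}{k} \alpha_j x_j - \fmax{j=1}{k} \beta_j x_j = (\alpha_{j_0} - \beta_{j_0}) x_{j_0},
\end{equation*}
whose Euclidean norm is at most $|\alpha_{j_0} - \beta_{j_0}| \cdot \|x_{j_0}\| \le \rho \|x_{j_0}\|$. Since $f(x_{j_0}) > 0$ and $f$ is $1$-homogeneous, the vector $x_{j_0}/f(x_{j_0})$ lies in $S$, hence $\|x_{j_0}\| \le C f(x_{j_0}) \le C \max_j f(x_j)$, delivering the claim.

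The main obstacle is pinning down the sharp multiplicative bound $\beta_i/\alpha_i \ge \gamma/(\gamma+\rho)$. A more immediate estimate that uses only $\beta_i \ge \alpha_i - \rho \ge \gamma - \rho$ produces the weaker ratio $(\gamma-\rho)/\gamma$, and the resulting chain only yields the linear threshold $\rho < \gamma\delta/(2+\delta)$, strictly stronger than and therefore insufficient to recover the square-root constant $\sqrt{1+\delta}-1$ stated in the lemma. Using $x \mapsto x/(x+\rho)$ together with the two-sided bound $\alpha_i, \beta_i \ge \gamma$ is what makes the argument sharp.
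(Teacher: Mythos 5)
Your proof is correct and follows essentially the same route as the paper's: identify the $\vee_f$-maximizing index $j_0$, use the two-sided ratio bound (your $\gamma/(\gamma+\rho)$ is exactly the paper's $(1+\rho/\gamma)^{-1}$) together with the gap to show $\beta_{j_0}f(x_{j_0})>\beta_j f(x_j)$ for $j\ne j_0$, and then bound $\|(\alpha_{j_0}-\beta_{j_0})x_{j_0}\|\le C\rho f(x_{j_0})$. The only cosmetic point is that your gap ratio is undefined when $f(x_j)=0$, but the target inequality is trivial in that case (the paper's multiplicative chain avoids the division).
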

\begin{proof}
The case $\zeta=0$ is equivalent to $x_1=\cdots=x_k=0$ and therefore obvious. Else let $j_0$ be the index fulfilling $\zeta= \alpha_{j_0} x_{j_0}>0$, which particularly implies that $f(x_{j_0})>0$. We generally note that $\norm{x}=f(x) \norm{x / f(x) }$ holds true for every $x \in \R^d \setminus \{0\}$. This means that we have $\norm{x} \le C f(x)$ for every $x \in \R^d$. Thus, since $\norm{\cdot}$ is symmetric and since $f$ is $1$-homogeneous, the assertion would follow if we could show that $f(\beta_1 x_1) \vee_f \cdots \vee_f f(\beta_k x_k) = \beta_{j_0} x_{j_0}$. For this purpose, we first observe that
\begin{equation*}
\frac{\alpha_j}{\beta_j} =1 + \frac{\alpha_j-\beta_j}{\beta_j} \le 1 + \frac{\rho}{\gamma} \quad \text{and} \quad \frac{\beta_j}{\alpha_j}  \le 1 + \frac{\rho}{\gamma} \qquad (j=1,...,k)
\end{equation*}
holds true. Fix $j \ne j_0$. Then, using the given assumptions, we obtain that 
\allowdisplaybreaks
\begin{align*}
f(\beta_j x_j) & \le \left( 1 + \frac{\rho}{\gamma} \right) f (\alpha_j x_j) \\
& \le \left(1 + \frac{\rho}{\gamma} \right)(1+ \delta)^{-1}   f (\alpha_{j_0} x_{j_0}) \\
& \le \left(1 + \frac{\rho}{\gamma} \right)^2 (1+ \delta)^{-1}   f (\beta_{j_0} x_{j_0}) \\
& < f (\beta_{j_0} x_{j_0}),
\end{align*}
which completes the proof. Note that we benefited from $f(x_{j_0})>0$ in the last step.
\end{proof}
\begin{lemma} \label{25041905}
Let $h : E \rightarrow \R_{+}$ be measurable and assume that $(h_n)$ is a sequence of simple functions with $h_n \le h$ and such that $h_n$ converges to $h$ uniformly on $E$. Then there exist further sequences $(h_{1,n}), (h_{2,n})$ of simple functions with $h_{1,n} \le h_n \le h_{2,n}$ and such that $h_{i,n} \uparrow h$ for $i=1,2$ as $n \rightarrow \infty$.
\end{lemma}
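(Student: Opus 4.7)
My approach is to construct the upper envelope by a straightforward running maximum and the lower envelope by subtracting a uniform error margin; the only subtlety is arranging the lower envelope to lie below $h_n$ itself, not merely below $h$.

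For $h_{2,n}$ I would simply take $h_{2,n} := \max_{1 \le k \le n} h_k$. A finite maximum of simple non-negative functions with supports in $\mathcal{E}_0$ is again simple, since its non-zero level sets lie in the finite union of the original supports. The inequalities $h_n \le h_{2,n} \le h$ and the monotonicity in $n$ are immediate from the definition, and together with the given $h_n \to h$ they force $h_{2,n} \uparrow h$ without additional work.

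For the lower envelope, set $\eta_n := \sup_{s \in E}(h(s) - h_n(s))$; uniform convergence yields $\eta_n \to 0$ and $\eta_n < \infty$ eventually. Since $(\eta_n)$ need not itself be monotone, I would pass to the non-increasing majorant $\bar\eta_n := \sup_{k \ge n}\eta_k$, which still decreases to $0$. Then I would define
\[
h_{1,n}(s) := \max_{1 \le k \le n}(h_k(s) - \bar\eta_k)_+.
\]
Each term $(h_k - \bar\eta_k)_+$ is simple (subtracting a constant from a simple non-negative function and truncating at $0$ only deletes some pieces of the partition), so the finite maximum $h_{1,n}$ is simple, and monotonicity in $n$ is built into the definition.

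The main obstacle, and the reason for passing to $\bar\eta_n$, is to ensure $h_{1,n} \le h_n$ rather than only $h_{1,n} \le h$. For $k \le n$ one has $\bar\eta_k \ge \bar\eta_n \ge \eta_n$, so
\[
h_k(s) - \bar\eta_k \;\le\; h(s) - \eta_n \;\le\; h_n(s),
\]
and combining with $h_n \ge 0$ yields $(h_k - \bar\eta_k)_+ \le h_n$; the maximum over $k \le n$ preserves this bound. For the convergence $h_{1,n} \uparrow h$, the lower estimate
\[
h_{1,n}(s) \;\ge\; (h_n(s) - \bar\eta_n)_+ \;\ge\; (h(s) - 2\bar\eta_n)_+
\]
tends to $h(s)$ as $n \to \infty$, while the already established $h_{1,n} \le h_n \le h$ provides the matching upper bound. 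All other verifications are purely formal bookkeeping.
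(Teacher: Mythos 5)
Your construction is correct and is essentially the paper's argument: the upper envelope is the same running maximum $\max\{h_1,\dots,h_n\}$, and your lower envelope (running maximum minus a monotonically decreasing uniform error bound, truncated at $0$) matches the paper's $\max\{0,\max\{h_1,\dots,h_n\}-1/l\}$ keyed to thresholds $N_l$, with your $\bar\eta_n$ playing the role of the staircase $1/l$. The key point in both proofs is the same: using a \emph{monotone} majorant of the uniform error so that the subtracted amount dominates $h-h_n$, which forces $h_{1,n}\le h_n$ while still letting the envelope increase to $h$.
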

\begin{proof}
By assumption we can find a strictly increasing sequence of naturals $(N_l)_l$ such that, for any $n \ge N_l$ and $s \in E$, we have $h(s)-h_n(s) \le 1/l$. In the case $n< N_1$ let $h_{1,n}:=0$. Else we define
\begin{equation*}
h_{1,n}:= \max \{0, \max \{h_1(s),...,h_n(s) \} - 1/l \}, \quad \text{if $N_l \le n < N_{l+1}$.}
\end{equation*}
Now it is easy to verify that this gives a sequence  $(h_{1,n})$ of simple functions as desired. Conversely, we can simply choose $h_{2,n}:=  \max \{h_1,...,h_n \} $ for every $n \inÊ\N$.
\end{proof}
The next result is the main step in order to extend the definition of the $f$-implicit extremal integral.
\begin{prop} \label{08051904}
Assume that $g \in L^{\alpha}_{+}(m)$ and that $(g_n)$ is a sequence of simple functions fulfilling $g_n \le g$ together with $g_n(s) \rightarrow g(s)$ for ($m$-almost) every $s \in E$. Then there exists a sequence of increasing sets $E_1,E_2,... \in \mathcal{E}_0$ with $m(E \setminus \bigcup_{l=1}^{\infty} E_l)=0$ and such that, for any $l \in \N$, the sequence $(I(g_n \indikatorzwei{E_l}))$ converges in probability as $n \rightarrow \infty$.  
\end{prop}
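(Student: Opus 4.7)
The plan is to construct the $E_l$ so that $g_n \to g$ uniformly on each $E_l$, and then upgrade this uniform convergence to convergence in probability of $(I(g_n \indikatorzwei{E_l}))$ via \cref{25041910,06051901}; since convergence in probability on $\R^d$ is complete, it is enough to prove the Cauchy property. For the construction, I combine $\sigma$-finiteness of $m$ with Egorov's theorem: $g\in L^\alpha(m)$ implies $g<\infty$ $m$-a.e., so picking $F_l \uparrow E$ in $\mathcal{E}_0$ and putting $\tilde F_l := F_l \cap \{g \le l\} \in \mathcal{E}_0$ yields sets of finite measure on which $g$ is bounded, with $\bigcup_l \tilde F_l$ of full $m$-measure. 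Egorov applied on $\tilde F_l$ produces $G_l \subset \tilde F_l$ with $m(\tilde F_l \setminus G_l) < 2^{-l}$ and $g_n \to g$ uniformly on $G_l$; setting $E_l := \bigcup_{k=1}^l G_k$ gives an increasing sequence in $\mathcal{E}_0$ on which $g_n \to g$ uniformly (finite union argument), and the bound $m(B \cap \tilde F_k) \le 2^{-k}$ for $B := E \setminus \bigcup_l E_l$ forces $m(B)=0$ upon letting $k\to\infty$.

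Now fix $l$ and abbreviate $h_n := g_n \indikatorzwei{E_l}$, $h := g\indikatorzwei{E_l}$; then $h\in L^\alpha_+(m)$ is bounded and $h_n \to h$ uniformly on $E$. Given $\eps>0$, I would choose $\eta>0$ small (specified below) and decompose $h_n = h_n^{(1)} + h_n^{(2)}$ with $h_n^{(1)} := h_n \indikatorzwei{B}$, $h_n^{(2)} := h_n \indikatorzwei{E_l \setminus B}$, where $B := \{h > 2\eta\}\cap E_l$. By disjointness of supports, $I(h_n) = I(h_n^{(1)}) \vee_f I(h_n^{(2)})$ with the two summands independent, and \eqref{eq:29071939} gives $I(h_n^{(2)}) \sim \Phi_{\alpha,\kappa}^f(\sigma_n)$ with $\sigma_n \le 2\eta\, m(E_l)^{1/\alpha}$. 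Hence, for $\eta$ small and $n$ large, both $\|I(h_n^{(2)})\|<\eps$ and $f(I(h_n^{(1)}))>f(I(h_n^{(2)}))$ hold on a single event of probability $\ge 1-2\eps$ (the second because $f(I(h_n^{(1)}))$ is independently $\Phi_\alpha$-distributed with scale close to $\|h\|_\alpha>0$), so $I(h_n)=I(h_n^{(1)})$ there and likewise for $m$; the problem reduces to a Cauchy estimate for $(I(h_n^{(1)}))$.

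To obtain that estimate, apply \cref{25041905} (using $h_n^{(1)} \to h\indikatorzwei{B}$ uniformly) to produce bracketing sequences, and \cref{25041910} to furnish $\delta>0$, $N\in\N$, and an event of probability $\ge 1-\eps$ on which the gap $f(I(h_n^{(1)})) \ge (1+\delta)f(X_n^{*,(1)})$ holds for every $n\ge N$. For $n,m\ge N$, pass to a common refinement $\{A_j\}_{j=1}^K$ of the two partitions (\cref{03041902}(a), all $A_j \subset B$) and write $h_n^{(1)} = \sum_j \alpha_j \indikatorzwei{A_j}$, $h_m^{(1)} = \sum_j \beta_j \indikatorzwei{A_j}$. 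Uniform convergence then gives $\min_j(\alpha_j \wedge \beta_j) \ge \eta$ and $\rho := \max_j |\alpha_j - \beta_j| \to 0$ as $n,m\to\infty$; once $\rho<\eta(\sqrt{1+\delta}-1)$, \cref{06051901} with $x_j = M(A_j)$ yields $\|I(h_n^{(1)}) - I(h_m^{(1)})\| \le C\rho\, f(M(B))$, which is $<\eps$ with probability $\ge 1-\eps$ since $f(M(B)) \sim \Phi_\alpha(m(B)^{1/\alpha})$ is a.s.\ finite. Combining everything gives $\|I(h_n)-I(h_m)\| = O(\eps)$ on an event of probability $\ge 1-O(\eps)$.

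The main obstacle is the careful balancing of three small parameters $\eta$, $\delta$ and $\rho$, which must be chosen in the order $\eta \to \delta \to N$: $\eta$ small enough to make the tail $h_n^{(2)}$ negligible and to ensure $f(I(h_n^{(1)})) \gg f(I(h_n^{(2)}))$ with high probability, yet large enough so that the lower bound $\gamma \ge \eta$ is usable in \cref{06051901}; $\delta$ then supplied by \cref{25041910} applied to the sequence $(h_n^{(1)})$; and finally $N$ large enough that the uniform-convergence error $\rho$ falls below $\eta(\sqrt{1+\delta}-1)$. The non-commutativity of $\vee_f$ also means some care is needed when identifying $I(h_n) = I(h_n^{(1)}) \vee_f I(h_n^{(2)})$ through a carefully ordered common partition.
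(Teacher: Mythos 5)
Your proof is correct and has the same overall architecture as the paper's: Egorov's theorem plus $\sigma$-finiteness to produce the $E_l$, common consistent partitions via \cref{03041902}(a), the gap property of \cref{25041910} (fed by the bracketing sequences from \cref{25041905}), and finally the perturbation estimate of \cref{06051901} to get the Cauchy property in probability. The one genuinely different step is how you secure the lower bound $\gamma$ on the coefficients that \cref{06051901} requires. The paper obtains it probabilistically: it bounds $f(X_n)$ from below by some $\tau>0$ (using that $f(X_n)$ converges a.s.\ to a strictly positive Fr\'echet limit) and $f(M(E_l))$ from above by some $K$, concludes that the maximizing coefficient is at least $\tau/K$ on a good event, and then discards all indices with coefficients below $\tau/(2K)$ --- a truncation of the index set that leaves the integral unchanged on that event. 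You instead truncate the integrand deterministically to $B=\{h>2\eta\}$, which forces every surviving coefficient to be at least $\eta$ once $\norm{h_n-h}_{\infty}<\eta$; the price is the extra (but easy) step of showing $I(h_n)=I(h_n\indikatorzwei{B})$ with high probability via the comparison of the two independent Fr\'echet scales, which is exactly the computation underlying \cref{09051901}. Both routes work; yours concentrates the probabilistic input into one explicit scale comparison and avoids the $\tau/K$ bookkeeping, while the paper's avoids modifying the integrand. Two points you should make explicit in a write-up: first, every cell of the common partition must lie inside $B$ (you note this), since a cell carrying coefficient zero would make $\gamma=0$ and void the hypothesis $\rho<\gamma(\sqrt{1+\delta}-1)$; second, the gap delivered by \cref{25041910} refers to $X_m^{*}$ computed in the consistent partition $\mathcal{P}_m$, so for $m>n$ your ``common refinement'' must be $\mathcal{P}_m$ itself --- refining further could enlarge $f(X_m^{*})$ and destroy the gap.
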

\begin{proof}
Since the measure $m$ is $\sigma$-finite, we can use Egorov's theorem (see Chapter VI, Exercise 3.1 in \cite{Els07}) to obtain increasing sets $E_1,E_2,... \in \mathcal{E}$ with $m(E \setminus \bigcup_{l=1}^{\infty} E_l)=0$ and such that, for any $l \in \N$, the convergence $g_n \indikatorzwei{E_l} \rightarrow g \indikatorzwei{E_l}$ holds uniformly as $n \rightarrow \infty$. Using the $\sigma$-finiteness of $m$ again and by a little abuse of notation, we can even assume that $(E_l) \subset \mathcal{E}_0$. Moreover, note that the proof of the present statement is obvious in the case $\norm{g}_{\alpha}=0$. Hence, without loss of generality, we can even assume that $\norm{g \indikatorzwei{E_l}}_{\alpha}  >0$ for every $l \in \N$. \\
Fix $l \in \N$ and consider the sequence $(g_n \indikatorzwei{E_l})_n$, where $g_n \indikatorzwei{E_l}$ is still simple. Let $\mathcal{P}_n'$ denote a partition of $g_n \indikatorzwei{E_l}$ for every $n \in \N$ and define $\mathcal{P}_1= \mathcal{P}_1'$. Then, using the construction from \cref{03041902} (a), we obtain a common partition for $g_1 \indikatorzwei{E_l}$ and $g_2 \indikatorzwei{E_l} $, denoted by $\mathcal{P}_2$ and which, in addition, fulfills $\mathcal{P}_1, \mathcal{P}_2' \le \mathcal{P}_2$. Based on $\mathcal{P}_2$ and $\mathcal{P}_3'$, we do the same to obtain $\mathcal{P}_3$. Inductively, this gives a sequence $(\mathcal{P}_n)$ of partitions such that, on the one hand, we have $\mathcal{P}_{n-1}, \mathcal{P}_{n}'  \le \mathcal{P}_{n}$. On the other hand, $g_{n-1} \indikatorzwei{E_l} $ and $g_{n} \indikatorzwei{E_l} $ can be both represented by using the common partition $\mathcal{P}_n$ for every $n \ge 2$. In particular, if we assume that $\mathcal{P}_n$ consists of $A_1^{(n)},...,A_{k_n}^{(n)} \in \mathcal{E}_0 \setminus \{\emptyset \}$ (which is always possible, see the proof of \cref{25041910}), there exist $\alpha_{1}^{(n)},..., \alpha_{k_n}^{(n)} \ge 0$ such that we have $g_n \indikatorzwei{E_l} \sim (A_j^{(n)},\alpha_j^{(n)})_{j=1,...,k_n} $, i.e.
\begin{equation} \label{eq:25041903}
g_n \indikatorzwei{E_l}  =\summezwei{j=1}{k_n} \alpha_j^{(n)} \indikatorzwei{A_j^{(n)}}, \quad n \in \N.
\end{equation}
At the same time, whenever $m>n$, the previous construction also allows us to find suitable coefficients $\beta_{m,1}^{(n)},..., \beta_{m,k_m}^{(n)} \ge 0$, which only depend on $\alpha_1^{(n)},....,\alpha_{k_n}^{(n)}$ and which fulfill
\begin{equation} \label{eq:06051904}
g_n \indikatorzwei{E_l}  =\summezwei{j=1}{k_m} \beta_{m,j}^{(n)} \indikatorzwei{A_j^{(m)}}.
\end{equation}
Based on \eqref{eq:25041903}, we define
\begin{equation*}
X_n:= I(g_n \indikatorzwei{E_l})= \fmax{j=1}{k_n} \alpha_j^{(n)} M(A_j^{(n)}) \quad \text{and}  \quad  X_n^{*}:=\fmaxzwei{j=1}{k_n} \alpha_j^{(n)} M(A_j^{(n)})
\end{equation*}
for every $n \in \N$. In view of \cref{25041905} (with $h:=g \indikatorzwei{E_l}$) we can apply \cref{25041910} (with $g_n \indikatorzwei{E_l}$ instead of $g_n$) to $X_n$ and $X_n^{*}$ in this case. Hence, for fixed $\eps>0$, there exist a set $A_0 \in \mathcal{A}$ with $\Pro(A_0) \ge 1-Ê\eps/3$ as well as some $\delta>0$ and $N_0 \in \N$ fulfilling
\begin{equation} \label{eq:26041902}
 f(X_n) (\omega) \ge (1+\delta) f(X_n^{*}) (\omega) \quad \text{for every $n \ge N_0$ and $\omega \in A_0$.}
 \end{equation}
The fundamental idea is to use \cref{06051901} now. However, its assumptions are not fulfilled yet. As a way out, recall the proof of \cref{25041910} and that, in a very similar way, $f(X_n)$ converges to a random variable $Y$ a.s. In addition, Proposition 2.7 in \cite{stoev} states that $Y \sim \Phi_{\alpha}(\norm{g \indikatorzwei{E_l}}_{\alpha})$. Moreover, since $\norm{g \indikatorzwei{E_l}}_{\alpha}>0$, we have that $Y>0$ a.s. If we combine both results, there exist a set $A_1 \in \mathcal{A}$ with $\Pro(A_1) \ge 1-Ê\eps/3 $ as well as some $\tau>0$ and $N_1 \in \N$ such that 
\begin{equation*}
 f(X_n) (\omega) \ge \tau  \quad \text{for every $n \ge N_1$ and $\omega \in A_1$.}
 \end{equation*}
Let $j_0=j_0(n,\omega)$ be the (random) index fulfilling $X_n(\omega)=\alpha^{(n)}_{j_0} M(A_{j_0}^{(n)})(\omega)$. Here, without loss of generality, we can assume that $A_j^{(n)} \subset E_l$ for every $n \in \N$ and $1 \le j \le k_n$. Using Proposition 3.2.4 in \cite{goldbach} again, this implies for those $j$ and $n$ that 
 \begin{equation} \label{eq:14081901}
 f(M(A_j^{(n)}))= f(I(\indikatorzwei{A_j^{(n)}})) \le  f(I(\indikatorzwei{E_l})) =f(M(E_l)) \quad \text{a.s.},
 \end{equation}
where $f(M(E_l)) \sim \Phi_{\alpha} (m(E_l)^{1/ \alpha})$ due to \eqref{eq:29071939}. Note that $m(E_l)< \infty$, since $E_l \in \mathcal{E}_0$. Hence, there finally exist a set $A_2 \in \mathcal{A}$ with $\Pro(A_2) \ge 1- \eps /3$ and some $K>0$ such that $f(M(E_l))(\omega) \le K$ for every $ \omega \in A_2$. Let $A:=A_0 \cap A_1 \cap A_2$ and observe that $\Pro(A) \ge 1- \eps$. Moreover, for any $\omega \in A$ and $n \ge N:= \max \{N_0,N_1\}$, we obtain that $\alpha_{j_0}^{(n)} \ge \tau/K$. Let 
\begin{equation*}
I_{n}=\{1 \le j \le k_n: \alpha_{j}^{(n)} \ge \tau / 2K\} \quad \text{and} \quad \widetilde{g_n} \sim (A_j^{(n)}, \alpha_j^{(n)}  )_{j \in I_n}.
\end{equation*}
Then it is clear that $X_n= I(g_n)$ and $I(\widetilde{g_n})$ coincide for every $n \ge N$ on $A$. In addition, if we introduce 
 \begin{equation} \label{eq:06051910}
Y_n:= I(\widetilde{g_n})=\fmax{j \in I_n}{} \alpha_j^{(n)} M(A_j^{(n)}) \quad \text{and}  \quad  Y_n^{*}:=\fmaxzwei{j \in I_n}{} \alpha_j^{(n)} M(A_j^{(n)}),
\end{equation}
relation \eqref{eq:26041902} can be preserved. More precisely, for every $n \ge N$ and $\omega \in A$, we have that
\begin{equation} \label{eq:25071901} 
 f(Y_n) (\omega) =  f(X_n) (\omega) \ge (1+\delta) f(X_n^{*}) (\omega)  \ge (1+\delta) f(Y_n^{*}) (\omega)  .
 \end{equation}
On the other hand, since the convergence $g_n \indikatorzwei{E_l} \rightarrow g \indikatorzwei{E_l}$ holds uniformly, we can choose some $N' \ge N$ such that, for every $m, n \ge N'$ and $s \in E$, the estimation
\begin{equation} \label{eq:06051911}
\norm{g_m \indikatorzwei{E_l} - g_n \indikatorzwei{E_l} }_{\infty}:= \sup_{s \in E} |g_m \indikator{E_l}{s} - g_n \indikator{E_l}{s} | < \min \left \{\frac{\tau}{4K} , \frac{\eps}{C K}, \frac{\tau}{4K} (\sqrt{1+ \delta}-1) \right\}
\end{equation}
is valid with $C$ being defined as in \cref{06051901}. Moreover, we claim that 
\begin{equation} \label{eq:06051902}
\forall m,n \ge N': \quad \Pro( \norm{I(g_m \indikatorzwei{E_l})- I(g_n \indikatorzwei{E_l}) } \ge \eps) \le \eps
\end{equation}
holds true. Recall that $\eps>0$ was arbitrary. Hence, it is well-known that \eqref{eq:06051902} would imply that the sequence $(I(g_n \indikatorzwei{E_l}))_n$ is Cauchy with respect to convergence in probability (see Corollary 6.15 in \cite{Kle08} for instance) and would therefore complete the proof. In order to prove \eqref{eq:06051902}, let us assume that $m>n \ge N'$ are fixed naturals. Since $\Pro(A) \ge 1- \eps$, it suffices to show for every $\omega \in A$ that $\norm{I(g_m \indikatorzwei{E_l})(\omega)- I(g_n \indikatorzwei{E_l}) (\omega)} < \eps$. For this purpose, we additionally fix $\omega \in A$ and recall that $X_m(\omega)=I(g_m \indikatorzwei{E_l})(\omega)= Y_m(\omega)$ according to \eqref{eq:06051910}. At the same time, we can also use another representation for $g_n \indikatorzwei{E_l}$, which is given by \eqref{eq:06051904}. More precisely, we have that $g_n \indikatorzwei{E_l} \sim (A_j^{(m)},\beta_{m,j}^{(n)} )_{j=1,...,k_m}$, where $\beta_{m,1}^{(n)},...,\beta_{m,k_m}^{(n)} \ge 0$ are appropriate coefficients. Recall that $\emptyset \notin \mathcal{P}_m$. Hence, by definition of $I_m$ and in view of \eqref{eq:06051911}, we obtain for every $j \in \{1,...,k_m\} \setminus I_m$ the estimation
\begin{equation} \label{eq:25071922}
\beta_{m,j}^{(n)} \le  |\alpha_{j}^{(m)} -  \beta_{m,j}^{(n)}|  + \alpha_{j}^{(m)}   <  \norm{g_m \indikatorzwei{E_l} - g_n \indikatorzwei{E_l} }_{\infty} + \frac{\tau}{2K} <  \frac{3 \tau}{4K} < \frac{\tau}{K}.
\end{equation}
Similarly to \eqref{eq:06051910}, the previous observation suggests to consider the truncation 
\begin{equation*}
Z_n:=Z_n^{(m)}:= \fmax{j \in I_m}{} \beta_{m,j}^{(n)} M(A_j^{(m)})
\end{equation*}
and to conclude that $I(g_n \indikatorzwei{E_l} ) (\omega)= Z_n(\omega)$. At this point, we neglect the fact that $I(g_n \indikatorzwei{E_l} )$ could vary on a $\Pro$-null set by using the representation from \eqref{eq:06051904} now. Anyway, let us summarize that the equality
\begin{equation} \label{eq:25071999}
\norm{I(g_m \indikatorzwei{E_l})(\omega)- I(g_n \indikatorzwei{E_l}) (\omega)}= \bignorm{\fmax{j \in I_m}{} \alpha_j^{(m)} M(A_j^{(m)})(\omega)   -   \fmax{j \in I_m}{} \beta_{m,j}^{(n)} M(A_j^{(m)})(\omega)  }
\end{equation}
holds true. Then, a similar calculation as performed in \eqref{eq:25071922}, using \eqref{eq:06051911} and the reverse triangle equality, ensures that $\beta_{m,j}^{(n)}, \alpha_j^{(m)} \ge  \frac{\tau}{4K}$ for every $j \in I_m$. Hence, if we let
\begin{equation*}
\gamma:=\frac{\tau}{4K} \quad \text{as well as} \quad  \rho:=\norm{g_m \indikatorzwei{E_l} - g_n \indikatorzwei{E_l} }_{\infty}
\end{equation*}
and recall \eqref{eq:14081901}, we can use \cref{06051901} together with \eqref{eq:25071901}-\eqref{eq:06051911} again to conclude that \eqref{eq:25071999} is smaller than $\eps$. As justified before already, this gives the assertion.
\end{proof}
Before putting things together, we have do deal with the \textit{remainder} $g_n \indikatorzwei{E_l^c}: =g_n \indikatorzwei{E \setminus E_l} $. 
\begin{lemma} \label{09051901}
In the situation of \cref{08051904} we have the following: For any $\eps>0$ there exists some $L=L(\eps) $ such that
\begin{equation}
\forall n ,l \ge L: \quad \Pro( I(g_n) \ne I(g_n \indikatorzwei{E_l})) \le \eps. \label{eq:12061901}
\end{equation}
\end{lemma}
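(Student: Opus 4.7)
If $\norm{g}_{\alpha}=0$, then $g_n=0$ $m$-a.e.\ for every $n$, so $I(g_n)=0=I(g_n \indikatorzwei{E_l})$ a.s.\ and the claim is trivial. Hence assume $\norm{g}_{\alpha}>0$. The strategy is to rewrite $I(g_n)$ as a $\vee_f$-combination of two independent implicit extremal integrals supported on $E_l$ and $E \setminus E_l$, reduce the event $\{I(g_n)\neq I(g_n \indikatorzwei{E_l})\}$ to a comparison of their $f$-values (which are independent $\alpha$-Fr\'echet), and then control the resulting probability by dominated convergence.

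Taking a common refinement of any partition of $g_n$ with $\{E_l, E \setminus E_l\}$ and invoking $(RM_2)$ termwise (exactly as done implicitly in the proof of \cref{08051904}) yields the a.s.\ identity
\begin{equation*}
I(g_n) = I(g_n \indikatorzwei{E_l}) \vee_f I(g_n \indikatorzwei{E \setminus E_l}),
\end{equation*}
where the two summands are independent by $(RM_1)$, since they only involve $M$ on the disjoint sets $A \cap E_l$ and $A \setminus E_l$ for $A$ running through the partition. In view of \eqref{eq:25071981} this forces, up to a $\Pro$-null set,
\begin{equation*}
\{I(g_n)\neq I(g_n \indikatorzwei{E_l})\} \subseteq \{f(I(g_n \indikatorzwei{E_l})) < f(I(g_n \indikatorzwei{E \setminus E_l}))\}.
\end{equation*}
By \eqref{eq:29071939} combined with \cref{23071901}, the two $f$-values are independent $\alpha$-Fr\'echet variables with scales $\sigma_1:=\norm{g_n \indikatorzwei{E_l}}_{\alpha}$ and $\sigma_2:=\norm{g_n \indikatorzwei{E \setminus E_l}}_{\alpha}$, respectively. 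A direct computation of the type already performed in the proof of \cref{03041905} then gives
\begin{equation*}
\Pro(I(g_n)\neq I(g_n \indikatorzwei{E_l})) \le \frac{\sigma_2^{\alpha}}{\sigma_1^{\alpha}+\sigma_2^{\alpha}} = \frac{\int_{E \setminus E_l} g_n^{\alpha}\,dm}{\int_{E} g_n^{\alpha}\,dm}.
\end{equation*}

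It remains to make this ratio uniformly small once $n,l$ are large. Since $g_n \le g \in L^{\alpha}_{+}(m)$ and $g_n \to g$ $m$-a.e., dominated convergence yields $\norm{g_n}_{\alpha}^{\alpha}\to \norm{g}_{\alpha}^{\alpha}>0$; and since $g^{\alpha}\indikatorzwei{E \setminus E_l}\to 0$ $m$-a.e.\ (thanks to $m(E \setminus \bigcup_l E_l)=0$), we also have $\int_{E \setminus E_l} g^{\alpha}\,dm \to 0$ as $l \to \infty$, which bounds $\int_{E \setminus E_l} g_n^{\alpha}\,dm$ uniformly in $n$. Choosing $L$ so large that $\norm{g_n}_{\alpha}^{\alpha}\ge \norm{g}_{\alpha}^{\alpha}/2$ for all $n \ge L$ and $\int_{E \setminus E_l} g^{\alpha}\,dm \le \eps \norm{g}_{\alpha}^{\alpha}/2$ for all $l \ge L$ completes the proof. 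The only mildly subtle point is the disjoint-support decomposition of $I(g_n)$ in the display above; everything else is either routine probability or dominated convergence.
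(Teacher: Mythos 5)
Your proof is correct and follows essentially the same route as the paper: decompose $I(g_n)=I(g_n\mathds{1}_{E_l})\vee_f I(g_n\mathds{1}_{E\setminus E_l})$ into independent pieces (the paper cites Proposition 3.2.4 in \cite{goldbach} for this, which is the max-linearity/independence for simple functions you re-derive), reduce the event to a comparison of two independent $\alpha$-Fr\'echet variables, compute the ratio $\norm{g_n\mathds{1}_{E_l^c}}_{\alpha}^{\alpha}/\norm{g_n}_{\alpha}^{\alpha}$, and kill it uniformly in $n$ via $g_n\le g$ and dominated convergence. No gaps worth noting.
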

\begin{proof}
Consider $n,l \in \N$. Since $g_n= \max \{g_n \indikatorzwei{E_l},g_n \indikatorzwei{E_l^c}\}$, Proposition 3.2.4 in \cite{goldbach} reveals that the random vectors $I(g_n \indikatorzwei{E_l})$ and $I(g_n \indikatorzwei{E_l^c})$ are independent and that 
\begin{equation} \label{eq:25071987}
I(g_n)=I(g_n \indikatorzwei{E_l}) \vee_f  I(g_n \indikatorzwei{E_l^c})= I(g_n \indikatorzwei{E_l^c})  \vee_f     I(g_n \indikatorzwei{E_l}) \quad \text{a.s.}
\end{equation} 
Recalling \eqref{eq:25071981}, we see that $I(g_n) \ne I(g_n \indikatorzwei{E_l})$ is equivalent to $f(I(g_n \indikatorzwei{E_l})) < f(I(g_n \indikatorzwei{E_l^c}))$ in this case and that \eqref{eq:12061901} would follow if we can prove that
\begin{equation} \label{eq:25071950}
\Pro(f(I(g_n \indikatorzwei{E_l})) < f(I(g_n \indikatorzwei{E_l^c}))) \rightarrow 0 \quad (\text{as $n,l \rightarrow \infty$}).
\end{equation}
Note that $f(I(g_n \indikatorzwei{E_l})) \sim \Phi_{\alpha}(\norm{g_n \indikatorzwei{E_l}}_{\alpha})$ and $f(I(g_n \indikatorzwei{E_l^c})) \sim \Phi_{\alpha}(\norm{g_n \indikatorzwei{E_l^c}}_{\alpha})$, respectively. On the one hand, this shows that we can assume that $\norm{g_n \indikatorzwei{E_l^c}}_{\alpha}>0$ (which particularly implies that $\norm{g}_{\alpha}>0)$. On the other hand, a similar computation as performed in \eqref{eq:03041907} yields 
\begin{equation*}
\Pro(f(I(g_n \indikatorzwei{E_k})) < f(I(g_n \indikatorzwei{E_k^c})))   = \frac{ \norm{g_n \indikatorzwei{E_l^c}}_{\alpha} }{\norm{g_n \indikatorzwei{E_l}}_{\alpha} + \norm{g_n \indikatorzwei{E_l^c}}_{\alpha} }  =\left( 1+ \frac{\norm{g_n \indikatorzwei{E_l}}_{\alpha}}{\norm{g_n \indikatorzwei{E_l^c}}_{\alpha}}  \right)^{-1} . 
\end{equation*}
Hence, instead of \eqref{eq:25071950} it suffices to show that 
\begin{equation} \label{eq:08051910}
1+ \frac{\norm{g_n \indikatorzwei{E_l}}_{\alpha}^{\alpha} }{\norm{g_n \indikatorzwei{E_l^c}}_{\alpha}^{\alpha}}  =  \frac{\int_{E} g_n^{\alpha} \, dm}{\int_{E_l^c} g_n^{\alpha} \, dm}   =\frac{\norm{g_n }_{\alpha}^{\alpha} }{ \norm{g_n \indikatorzwei{E_l^c}}_{\alpha}^{\alpha}}  \rightarrow \infty \quad (\text{as $n,l \rightarrow \infty$}).
\end{equation}
For this purpose, observe that we have $\norm{g_n }_{\alpha}^{\alpha} \rightarrow   \norm{g}_{\alpha}^{\alpha}>0$ (as $n \rightarrow \infty)$ by the dominated convergence theorem. Conversely, we obtain (for every $n \in \N$) that $ \norm{g_n \indikatorzwei{E_l^c}}_{\alpha}^{\alpha} \le  \norm{g \indikatorzwei{E_l^c}}_{\alpha}^{\alpha} \rightarrow 0$ (as $l \rightarrow \infty$), since $E_l^{c} \downarrow $ with $m(\cap_{l=1}^{\infty} E_l^{c})=0$ and since $g \in L^{\alpha}_{+}(m)$. This implies \eqref{eq:08051910}.

\end{proof}
Finally, we are able to answer the questions from the beginning of this section. In this context, recall \cref{08051911} and notice that \eqref{eq:08051920} below will respect the definition of $I(g)$ so far (see \eqref{eq:17071910}). Also note that the proof of part (b) of the following result benefits from the fact that we stated \cref{08051904} in an extensive way. That is we did not demand the convergence $g_n \rightarrow g$ to be monotone in the first place. 
\begin{theorem} \label{12061903}
We have the following:
\begin{itemize}
\item[(a)] $\mathcal{I}(M)=L^{\alpha}_{+}(m)$, which is independent of $f$.
\item[(b)] Assume that $g \in L^{\alpha}_{+}(m)$ and that $(g_n)$ is a sequence of simple functions fulfilling $g_n \uparrow g$. Then the sequence $(I(g_n))$ converges in probability and this limit does a.s. not depend on the particular choice of simple functions $(g_n)$ with $g_n \uparrow g$.
\end{itemize}
We call this limit the \textit{($f$-implicit extremal) integral of $g$ (with respect to $M$)} and write
\begin{equation} \label{eq:08051920}
I_{M}^{\vee_f}(g):=I(g):= \int_E^{\vee_f} g(s) \, dM(s) :=\Pro\text{-} \limes{n}{\infty} I(g_n). 
\end{equation}
\end{theorem}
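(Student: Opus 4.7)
The overall strategy is to settle the convergence part of (b) using the two preparatory results \cref{08051904} and \cref{09051901}, then obtain uniqueness of the limit by an interleaving trick, and finally deduce (a) essentially as a byproduct. A useful observation going in is that \cref{08051904} and \cref{09051901} are phrased without any monotonicity hypothesis on $(g_n)$: only $g_n\le g$ and $g_n\to g$ pointwise are used (the proof of \cref{09051901} depends only on dominated convergence plus the bound $g_n\le g$). This is precisely what makes the uniqueness step work.

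For the convergence in (b), fix $g\in L^{\alpha}_+(m)$ and simple $g_n\uparrow g$. \Cref{08051904} yields an increasing exhaustion $(E_l)\subset\mathcal{E}_0$ with $m(E\setminus\bigcup_l E_l)=0$ such that each sequence $(I(g_n\indikatorzwei{E_l}))_n$ converges in probability. Given $\eps>0$, \cref{09051901} produces $L=L(\eps)$ with $\Pro(I(g_n)\ne I(g_n\indikatorzwei{E_L}))\le\eps$ for all $n\ge L$. The triangle-style decomposition
\[
\Pro(\norm{I(g_n)-I(g_m)}>\eps)\le 2\eps+\Pro(\norm{I(g_n\indikatorzwei{E_L})-I(g_m\indikatorzwei{E_L})}>\eps),
\]
combined with the Cauchy property of $(I(g_n\indikatorzwei{E_L}))_n$, shows that $(I(g_n))_n$ is Cauchy in probability and therefore converges.

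The main obstacle is the independence of the limit from the chosen approximating sequence. Let $(g_n')$ be a second sequence of simple functions with $g_n'\uparrow g$, and form the interleaved sequence $\tilde g_{2n-1}:=g_n$, $\tilde g_{2n}:=g_n'$. It is no longer monotone, but it still satisfies $\tilde g_n\le g$ and $\tilde g_n\to g$ pointwise. Since \cref{08051904} and \cref{09051901} do not rely on monotonicity, the same Cauchy-in-probability argument shows that $(I(\tilde g_n))_n$ converges in probability. Both $(I(g_n))_n$ and $(I(g_n'))_n$ are subsequences of this convergent sequence, so they share the common limit, which is unique almost surely. This legitimizes the definition \eqref{eq:08051920}.

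Part (a) now follows with little extra effort. The inclusion $L^{\alpha}_+(m)\subseteq\mathcal{I}(M)$ is immediate from (b), since every $g\in L^{\alpha}_+(m)$ admits an increasing approximation by simple functions supported on sets of finite measure. For the converse, let $g\in\mathcal{I}(M)$ with witnessing sequence $g_n\uparrow g$ such that $I(g_n)$ converges in probability. Continuity of $f$ transfers the convergence to $f(I(g_n))$, and \eqref{eq:29071939} gives $f(I(g_n))\sim\Phi_{\alpha}(\norm{g_n}_{\alpha})$. If $\norm{g_n}_{\alpha}$ were unbounded, then $\Pro(f(I(g_n))\le K)=\exp(-\norm{g_n}_{\alpha}^{\alpha}K^{-\alpha})\to 0$ for every $K>0$, contradicting convergence in probability to a finite limit. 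Monotone convergence therefore forces $\norm{g}_{\alpha}=\lim_n\norm{g_n}_{\alpha}<\infty$, i.e.~$g\in L^{\alpha}_+(m)$. Since $L^{\alpha}_+(m)$ depends only on $m$ and $\alpha$, the identification $\mathcal{I}(M)=L^{\alpha}_+(m)$ is visibly independent of $f$.
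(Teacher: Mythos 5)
Your proposal is correct and follows essentially the same route as the paper: the Cauchy-in-probability argument combining \cref{08051904} with \cref{09051901}, the interleaving trick for uniqueness (exploiting that those two results require only $g_n\le g$ and pointwise convergence, not monotonicity), and the distributional argument via $f(I(g_n))\sim\Phi_{\alpha}(\norm{g_n}_{\alpha})$ for the reverse inclusion in (a). No substantive differences.
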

\begin{proof}
We first prove part (b): Fix $\eps>0$ and consider increasing sets $E_1,E_2,... \in \mathcal{E}_0$ as provided by \cref{08051904}. According to \cref{09051901} there exists some $L$ such that we have $\Pro( I(g_n) \ne I(g_n \indikatorzwei{E_l})) \le \eps/3$ for every $n,l \ge L$. At the same time, \cref{08051904} states that $(I(g_n \indikatorzwei{E_{L}}))_n$ is Cauchy (in probability), i.e. we can find some $N$ fulfilling 
\begin{equation*}
\Pro(\norm{I(g_m \indikatorzwei{E_L})  - I(g_n \indikatorzwei{E_L})} \ge \eps/3   ) \le \eps/3 \quad \text{for every $m,n \ge N$}. 
\end{equation*}
Note that the event $\{\norm{I(g_m )-I(g_n)} \ge  \eps  \} $ is a subset of 
\begin{equation} \label{eq:30071936}
 \{  \norm{I(g_m \indikatorzwei{E_L})-I(g_n \indikatorzwei{E_L} )} \ge \eps/3 \} \,  \cup  \bigcup_{i \in \{ m,n\}} \{ \norm{I(g_i)-I(g_i \indikatorzwei{E_L})} \ge \eps/3 \}.
 \end{equation}
Hence, for every $m,n \ge \max \{L,N\}$, we easily conclude that $\Pro(\norm{I(g_m)-I(g_n)} \ge  \eps ) \le \eps$, which, as in the proof of \cref{08051904}, shows that $(I(g_n))$ converges in probability. Denote this limit by $X$ and consider a different sequence of simple functions $(g_n')$, still fulfilling $g_n' \uparrow g$. Then, repeating the previous ideas, we obtain that $(I(g_n'))$ converges in probability, say with limit $X'$. Define a further sequence of simple functions $(h_{\nu})_{\nu \in \N}$ by $h_{2 n-1}:=g_{n}$ and $h_{2 n}:=g'_{n}$ for every $n \in \N$, respectively. In particular, we observe that \cref{08051904} as well as \cref{09051901} still apply to $(h_{\nu})$ such that $(I(h_{\nu}))$ converges, too. However, by regarding suitable subsequences, it follows that $X$ and $X'$ coincide a.s. \newline
Concerning part (a), consider $g \in L^{\alpha}_{+}(m)$ and choose a sequence $(g_n)$ of simple functions with $g_n \uparrow g$ (see section 2.3 in \cite{stoev} for example to verify that such a sequence always exists). By what we have just proved, it follows that $g \in \mathcal{I}(M)$ and therefore that $ L^{\alpha}_{+}(m)\subset \mathcal{I}(M) $. Conversely, fix $g \in \mathcal{I}(M)$ and let $(g_n)$ be a proper sequence of simple functions in the sense of \cref{08051911}. Denote the associated stochastic limit of $(I(g_n))$ by $Y$ and observe that we have $\norm{g_n}_{\alpha} \uparrow \norm{g}_{\alpha}  \in [0,\infty]$ by the monotone convergence theorem. At the same time, 
\begin{equation} \label{eq:25071947}
\forall x>0: \quad \Pro(f(I(g_n)) \le x) =  \exp(- \norm{g_n}_{\alpha}^{\alpha}  \, x^{-\alpha})
\end{equation}
holds true, while Proposition 3.2.4 in \cite{goldbach} implies that $(f(I(g_n)))$ is increasing a.s. However, by the continuous mapping theorem, the corresponding limit coincides with $f(Y)$ a.s. In view of  of \eqref{eq:25071947} and since $f(Y)$ is $[0,\infty)$-valued, it is easy to check that $ \norm{g}_{\alpha}< \infty$, i.e. we have that $g \in L^{\alpha}_{+}(m)$ and therefore $ L^{\alpha}_{+}(m)\supset \mathcal{I}(M) $.
\end{proof}
\section{Properties and examples} \label{section4}
Based on \cref{12061903}, it appears natural to study properties of the mapping $\mathcal{I}(M) \ni g \mapsto I(g)$ in the sequel. Actually, we already encountered some of them, for example in \eqref{eq:25071987}. A closer look on \eqref{eq:25071987} reveals that, at least for simple functions $g$, the stochastic integral manages to overcome some of the problems that occur in the context of the $\vee_f$-operation. It will be crucial to gain a corresponding insight for functions $g \in L^{\alpha}_{+}(m)\ $. Therefore, we start with the following preparation. 
\begin{lemma} \label{01071910}
Let $g_1,g_2 \in L^{\alpha}_{+}(m)$ such that $g_1 \le g_2$. Then $I(g_1) \vee_f I(g_2)= I(g_2) \vee_f I(g_1)$ holds true a.s.
\end{lemma}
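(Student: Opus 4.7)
The plan is to exploit that $\vee_f$ fails to commute at a pair $(x,y)$ only when $f(x)=f(y)$ and $x\ne y$, so that the lemma is equivalent to
\[
\Pro\bigl(\{f(I(g_1))=f(I(g_2))\}\cap\{I(g_1)\ne I(g_2)\}\bigr)=0.
\]
A preliminary step is the monotonicity $f(I(g_1))\le f(I(g_2))$ almost surely: I would choose compatible simple approximants $g_{i,n}\uparrow g_i$ with $g_{1,n}\le g_{2,n}$ on a common refining sequence of partitions (as in \cref{25041901}(c), \cref{03041902}), invoke Proposition~3.2.4 of \cite{goldbach} pointwise in $n$ to get $f(I(g_{1,n}))\le f(I(g_{2,n}))$ a.s., and then pass to the limit via \cref{12061903} together with the continuous mapping theorem for $f$. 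Thus only the tie event can violate commutativity.

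For simple $g_1\le g_2$ on a common partition $A_1,\ldots,A_k$ with coefficients $\alpha_j\le\beta_j$, the variables $Z_j:=f(M(A_j))\sim\Phi_\alpha(m(A_j)^{1/\alpha})$ are independent and absolutely continuous on $(0,\infty)$ by \cref{11070901} and \eqref{eq:17071901}. The almost surely unique argmaxes $J_i^{\ast}$ of $(\alpha_jZ_j)_j$ and $(\beta_jZ_j)_j$ either coincide at some $j^{\ast}$, in which case the $f$-equality forces $\alpha_{j^{\ast}}=\beta_{j^{\ast}}$ and hence $I(g_1)=\alpha_{j^{\ast}}M(A_{j^{\ast}})=\beta_{j^{\ast}}M(A_{j^{\ast}})=I(g_2)$, or they differ, in which case the coincidence $\alpha_{J_1^{\ast}}Z_{J_1^{\ast}}=\beta_{J_2^{\ast}}Z_{J_2^{\ast}}$ is a null event by independence and continuity. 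Hence the inclusion $\{f(I(g_1))=f(I(g_2))\}\subseteq\{I(g_1)=I(g_2)\}$ holds almost surely for simple integrands, and the simple case of the lemma is settled.

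To lift this to general $g_1,g_2\in L^{\alpha}_{+}(m)$, I would split $E=B\sqcup B^c$ along $B:=\{g_1<g_2\}$: on $B^c$ the two functions coincide, so $I(g_1|_{B^c})=I(g_2|_{B^c})$, while on $B$ they are pointwise strictly ordered. Using $(RM_1)$ and the factorization $I(g_i)=I(g_i|_{B^c})\vee_f I(g_i|_B)$ (inherited from Proposition~3.2.4 of \cite{goldbach} for simple functions with disjoint supports and then extended through \cref{12061903}), the common piece $I(g_1|_{B^c})$ is independent of the strict pair $(I(g_i|_B))_{i=1,2}$. A case distinction on which of the three resulting $f$-values dominates reduces the tie event to the strict subcase $g_1<g_2$ on $B$, where the approximants may be chosen with strictly separated coefficients on every cell of positive $m$-measure and the simple-case analysis then yields the null-set conclusion in the limit. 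The principal obstacle is precisely this final limit passage, because $\vee_f$ is discontinuous on the very tie event we wish to annihilate, so the continuous mapping theorem is unavailable; strict separation of $f$-values along the approximating sequence must be transported to the limit using independence from $(RM_1)$ and the scale-monotonicity of the Stoev--Taqqu extremal integral inherited via \cref{23071901}.
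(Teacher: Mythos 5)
Your reduction of the lemma to the event $\{f(I(g_1))=f(I(g_2)),\ I(g_1)\ne I(g_2)\}$ is exactly the paper's starting point (the set $A$ in \eqref{eq:29071999}), and your treatment of the simple case is sound: on a common partition, either the argmaxes coincide (and the tie in $f$-values forces equality of the coefficients and hence of the vectors) or they differ (and the tie is a null event by absolute continuity). But the proof stands or falls on the limit passage, and there you have a genuine gap which you yourself flag as "the principal obstacle" without resolving it. The difficulty is that the null-set conclusion for each approximant $n$ does not transfer to the limit: $f(I(g_{1,n}))-f(I(g_{2,n}))$ can be a.s.\ nonzero for every $n$ and still converge to zero on a set of positive probability, and the tie event for the limits is in fact \emph{not} null in general (e.g.\ it contains the positive-probability event that the maximum over the support of $g_1$ dominates). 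Your proposed remedies do not close this: splitting $E$ along $\{g_1<g_2\}$ does not help, because pointwise strict separation of the coefficients gives no control as $n\to\infty$ (the separation can degenerate), and "independence from $(RM_1)$ plus scale-monotonicity of the Stoev--Taqqu integral" is not an argument.

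What the paper uses to cross the limit is the quantitative gap estimate of \cref{25041910} (built on \cref{03041905}): with probability at least $1-p/2$ and uniformly for large $n$, the largest value $f(Y_n)$ exceeds the second largest $f(Y_n^{*})$ by a fixed factor $1+\delta$. This uniform gap is what makes your two-case analysis survive the limit. If the argmax indices for $X_n=I(g_{1,n})$ and $Y_n=I(g_{2,n})$ differ, then $f(X_n)\le f(Y_n^{*})\le(1+\delta)^{-1}f(Y_n)$, contradicting $f(X_n)/f(Y_n)\to1$ on the tie event; if they coincide, $1$-homogeneity gives $f(Y_n-X_n)=f(Y_n)-f(X_n)\to0$, and Lemma~3.1.14 of \cite{goldbach} upgrades this to $Y_n-X_n\to0$, so the limiting vectors coincide, contradicting $\omega\in A$. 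Without an ingredient playing the role of this uniform gap, your argument does not go through.
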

\begin{proof}
Recall \eqref{eq:25071981} and the beginning of the proof of \cref{09051901}. Then, letting 
\begin{equation} \label{eq:29071999}
A:=\{ I(g_1)Ê\ne I(g_2) \text{ and } f(I(g_1))=f(I(g_2)) \},
\end{equation} 
we have to show that $\Pro(A)=0$. Since $g_1 \le g_2$, there exist sequences $(g_{1,n})$ and $(g_{2,n})$ of simple functions such that $g_{1,n} \le g_{2,n}$ and $g_{i,n} \uparrow g_i$ for $i=1,2$ as $n \rightarrow \infty$. Moreover, \cref{03041902} allows us to find a common sequence of partitions (each not containing $\emptyset$, see above) for $g_{1,n}$ and $g_{2,n}$, which, in addition, is consistent. More precisely, let us  assume that
\begin{equation*}
g_{1,n} \sim (A_j^{(n)}, \alpha_j^{(n)})_{j=1,...,k_n} \quad \text{and} \quad  g_{2,n} \sim (A_j^{(n)}, \beta_j^{(n)})_{j=1,...,k_n}, 
\end{equation*}
respectively. In view of $g_{1,n} \le g_{2,n}$ and $A_j^{(n)} \ne \emptyset$, we necessarily have that $\alpha_j^{(n)} \le \beta_j^{(n)}$ for every $n \in \N$ and $1 \le j \le k_n$. Let
\begin{equation*}
X_n:=I(g_{1,n})= \fmax{j=1}{k_n} \alpha_j^{(n)} M(A_j^{(n)})  \quad \text{and} \quad Y_n:= I(g_{2,n})=\fmax{j=1}{k_n} \beta_j^{(n)} M(A_j^{(n)}) 
\end{equation*} 
together with
\begin{equation*} 
X_n^{*}:=\fmaxzwei{j=1}{k_n} \alpha_j^{(n)} M(A_j^{(n)}) \quad \text{and} \quad Y_n^{*}:=\fmaxzwei{j=1}{k_n} \beta_j^{(n)} M(A_j^{(n)}).
\end{equation*}
Anyway, \cref{12061903} states that $I(g_{i,n})$ converges to $I(g_i)$ in probability and therefore, by passing to a suitable subsequence, a.s. Without loss of generality, we omit the consideration of this subsequence in the sequel and therefore obtain a set $B \in \mathcal{A}$ such that $\Pro(B)=1$ and 
\begin{equation} \label{eq:29071901}
\forall \omega \in B \, \, \forall i=1,2: \quad I(g_{i,{n}}) (\omega) \rightarrow I(g_i)(\omega) \quad (n \rightarrow \infty).
\end{equation}
Now, if we assume that $\Pro(A)=:p>0$, we can apply \cref{25041910} to $(Y_n)$, providing a set $C \in \mathcal{A}$ with $\Pro(C) \ge 1-Êp/2$ as well as some $\delta>0$ and $N \in \N$ fulfilling
\begin{equation}  \label{eq:19071903}
 f(Y_n) (\omega) \ge (1+\delta) f(Y_n^{*}) (\omega) \quad \text{for every $n \ge N$ and $\omega \in C$.}
 \end{equation}
Note that, for certain (random) indices $j_1=j_1(n, \omega)$ and $j_2=j_2(n, \omega)$, we can always write 
\begin{equation*}
X_n(\omega)= \alpha_{j_1}^{(n)} M(A_{j_1}^{(n)}) \quad \text{and} \quad Y_n(\omega)= \beta_{j_2}^{(n)} M(A_{j_2}^{(n)}).
\end{equation*}
Moreover, observe that $\Pro(A \cap B \cap C) >0$. Then, for fixed $\omega \in A \cap B \cap C$, we have to distinguish two cases. In the first case, the indices $j_1$ and $j_2$ differ. Then, using \eqref{eq:19071903} and $\alpha_j^{(n)} \le \beta_j^{(n)}$, we obtain for every $n \ge N$ that 
\begin{equation} \label{eq:29071902}
f(X_n)(\omega) \le f(Y_n^{*}) (\omega) \le (1+ \delta)^{-1} f(Y_n) (\omega).
\end{equation}
However, by definition of the set $A$ and using the continuity of $f$ together with \eqref{eq:29071901}, we verify that $f(X_n)(\omega)/f(Y_n)(\omega) \rightarrow 1$. This means that \eqref{eq:29071902} can only happen for finitely many $n$. Else, the second case occurs, where $j_1=j_2$.  By the $1$-homogeneity of $f$, this yields
\begin{equation*} 
f( X_n(\omega) - Y_n(\omega) )=f(X_n(\omega) )- f(Y_n(\omega) ) \quad \text{for almost all $n$}.
\end{equation*} 
Using similar arguments as before, it follows that $f( X_n(\omega) - Y_n(\omega) ) \rightarrow 0$. However, in view of Lemma 3.1.14 in \cite{goldbach}, this implies that $(X_n(\omega) - Y_n(\omega)) \rightarrow 0$. Remembering that 
\begin{equation*}
I(g_1)(\omega)-I(g_2)(\omega)=\lim_{n \rightarrow \infty}(X_n(\omega) - Y_n(\omega)  )  \quad (n \rightarrow \infty),
\end{equation*}
we finally obtain that $I(g_1)(\omega)=I(g_1)(\omega) $, which is a contradiction to the claim $\omega \in A$.
\end{proof}
As announced before already, we want to proceed with some illuminating properties of the $f$-implicit extremal integral that mostly extend from the consideration of simple functions. In this context, recall from \cite{goldbach} the \textit{partial order} $\le_f$ on $\R^d$, being defined by
\begin{equation} \label{eq:29071953}
x \le_f y \qquad : \Leftrightarrow \qquad f(x)<f(y) \text{   \, or \,   } x=y.
\end{equation}
See Proposition 1.3.2 and Lemma 1.3.3 in \cite{goldbach} for several properties concerning this binary relation. Also note that the proof of \cref{01071910}, in particular the set $A$ from \eqref{eq:29071999}, already anticipated this relation to some extent. This means that we have $x \le_f y$ or $y \le_f x$ if and only if $x \vee_f y =y \vee_f x$. 
\begin{prop} \label{09051933}
Let $g_1,g_2 \in L^{\alpha}_{+}(m)$. 
\begin{itemize}
\item[(i)] ($f$-implicit $\alpha$-Fr\'{e}chet) The random vector $I(g_1)$ is $f$-implicit $\alpha$-Fr\'{e}chet distributed in the sense of \cref{23071920}. More precisely, $I(g_1) \sim \Phi_{\alpha, \kappa}^f (\norm{g_1}_{\alpha})$.
\item[(ii)] ($f$-implicit max-linearity) For $a,b \ge 0$ we have that
\begin{equation} \label{eq:01071999}
I(ag_1 \vee b g_2) = a I(g_1) \vee_f b I(g_2) \quad \text{a.s.},
\end{equation}
which particularly means that $I(g_1)$ and $I(g_2)$ commute under the $\vee_f$-operation.
\item[(iii)] ($f$-implicit independence) The random vectors $I(g_1)$ and $I(g_2)$ are independent if and only if $g_1g_2=0$ $m$-a.e.
\item[(iv)] ($f$-implicit monotonicity) We have: $g_1 \le g_2$ $m$-a.e. if and only if $I(g_1) \le_f  I(g_2) $ a.s. In addition, $g_1=g_2$ $m$-a.e. is equivalent to $I(g_1)= I(g_2)$ a.s. 
\end{itemize}
\end{prop}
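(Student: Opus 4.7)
The overall strategy is to reduce each assertion to the simple-function case (covered by Proposition 3.2.4 in \cite{goldbach}) and then promote it to $L^{\alpha}_{+}(m)$ via the approximation supplied by \cref{12061903}. A preparatory observation that is useful in parts (iii) and (iv) is that \cref{23071901} extends from simple functions to all of $L^{\alpha}_{+}(m)$: for simple $g_n \uparrow g$, the identity $f(I(g_n)) = J(g_n)$ (writing $J$ for the classical extremal integral of \cite{stoev}) together with $I(g_n) \to I(g)$ in probability (and continuity of $f$) and Proposition 2.7 of \cite{stoev} give $f(I(g)) = J(g)$ a.s.

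For (i), I would pick simple $g_n \uparrow g_1$, use $I(g_n) \sim \Phi_{\alpha, \kappa}^f (\norm{g_n}_{\alpha})$ from \eqref{eq:29071939}, note that $I(g_n) \to I(g_1)$ in distribution by \cref{12061903} and that $\norm{g_n}_{\alpha} \to \norm{g_1}_{\alpha}$ by monotone convergence, and invoke continuity of $\sigma \mapsto \Phi_{\alpha, \kappa}^f(\sigma)$ in its scale parameter. For (iii), if $g_1 g_2 = 0$ $m$-a.e.\ one approximates each $g_i$ by simple $g_{i,n} \uparrow g_i$ supported in $\{g_i > 0\}$, so the two families live on disjoint partitions; Proposition 3.2.4 in \cite{goldbach} combined with $(RM_1)$ yields independence of $I(g_{1,n})$ and $I(g_{2,n})$, which is preserved under convergence in probability. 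Conversely, independence of $I(g_1), I(g_2)$ passes to $f(I(g_1)), f(I(g_2))$, and using the extension of \cref{23071901} this reduces to the known characterization for classical extremal integrals in \cite{stoev}.

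For (ii), take simple $g_{i,n} \uparrow g_i$ on common refining partitions (via \cref{03041902}); then $h_n := a g_{1,n} \vee b g_{2,n}$ is simple, $h_n \uparrow a g_1 \vee b g_2$, and Proposition 3.2.4 of \cite{goldbach} gives $I(h_n) = a I(g_{1,n}) \vee_f b I(g_{2,n})$ a.s. By \cref{12061903} the left side converges to $I(a g_1 \vee b g_2)$ in probability, while passing to a subsequence we may assume joint a.s.\ convergence $I(g_{i,n}) \to I(g_i)$. On $\{f(a I(g_1)) \ne f(b I(g_2))\}$ the operation $\vee_f$ is continuous and the right side converges to $a I(g_1) \vee_f b I(g_2)$. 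The delicate piece, and the main obstacle, is the tie set $T = \{f(a I(g_1)) = f(b I(g_2))\}$: using (i) and the extended \cref{23071901}, the pair $(J(a g_1), J(b g_2))$ is jointly $\alpha$-Fr\'echet, and a direct analysis via the spectral representation of $M$ (the points of the underlying Poisson process that maximize $g_1$ and $g_2$ must coincide on $T$) shows $a I(g_1) = b I(g_2)$ a.s.\ on $T$, so the two candidates agree and the identity still holds.

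For (iv), the forward direction is a direct application of (ii) with $a = b = 1$: $g_2 = g_1 \vee g_2$ gives $I(g_2) = I(g_1) \vee_f I(g_2)$ a.s., which combined with \cref{01071910} and the characterization of $\le_f$ stated after \eqref{eq:29071953} forces $I(g_1) \le_f I(g_2)$ a.s. The converse transfers $I(g_1) \le_f I(g_2)$ to $f(I(g_1)) \le f(I(g_2))$ a.s., i.e.\ $J(g_1) \le J(g_2)$ a.s.\ by the extended \cref{23071901}, whence $g_1 \le g_2$ $m$-a.e.\ by the monotonicity of the classical extremal integral in \cite{stoev}. The equality statement then follows by applying monotonicity in both directions. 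The only non-routine step in the whole program is the tie-set analysis in (ii); everything else is approximation combined with already-established classical results.
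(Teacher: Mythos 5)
Your treatment of (i), (iii) and (iv) is essentially the paper's: (i) is the same limit argument with monotone convergence of the scales; for (iii) the paper obtains the \emph{if}-part as a by-product of (ii) and imitates Proposition 3.2.4 (iv) of \cite{goldbach} for the converse, whereas you route the converse through the classical characterization in \cite{stoev} via $f(I(g_i))=J(g_i)$ --- a legitimate variant; for (iv) your derivation of $I(g_1)\le_f I(g_2)$ from (ii) with $g_2=g_1\vee g_2$ is a clean shortcut, and the converse again goes through \cite{goldbach}/\cite{stoev} in either version.

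The genuine gap is in (ii), which is the crux of the whole proposition. You correctly identify that everything hinges on the tie set $T=\{f(aI(g_1))=f(bI(g_2))\}$, but your resolution --- ``a direct analysis via the spectral representation of $M$ (the points of the underlying Poisson process that maximize $g_1$ and $g_2$ must coincide on $T$)'' --- invokes machinery that is not available. The integral $I(g)$ for non-simple $g$ is \emph{defined} only as a limit in probability of $I(g_n)$; no Poisson series representation of $I(g)$ is established anywhere (deriving one and showing it agrees a.s.\ with the stochastic limit would itself require essentially the entire convergence analysis of Section 3), so the claim that the maximizing atoms must coincide on $T$ is an unproven assertion, not a proof step. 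The paper instead proves the needed statement --- that on $T$ the two random vectors actually coincide, equivalently $I(g_1)\vee_f I(g_2)=I(g_2)\vee_f I(g_1)$ a.s.\ --- in three stages: (a) Lemma \ref{01071910} handles the ordered case $g_1\le g_2$ by a contradiction argument built on the gap estimate of Proposition \ref{25041910} (assuming the tie set has positive probability, one extracts a $\delta$-gap between the maximum and the second maximum of the approximating $Y_n$ and rules out both the case of distinct and of equal maximizing indices); (b) the disjoint-support case $g_1g_2=0$ is settled because two independent nondegenerate Fr\'echet variables are a.s.\ distinct; (c) the general case follows by writing $g_i=g_i\mathds{1}_{\{g_1\le g_2\}}\vee g_i\mathds{1}_{\{g_1>g_2\}}$ and using associativity of $\vee_f$. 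Since you already cite Lemma \ref{01071910} in part (iv), the fix is to use it (together with the decomposition (c)) in part (ii) as well, rather than appealing to a spectral representation.
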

\begin{proof}
For simple functions $g_1$ and $g_2$, the whole statement follows from Proposition 3.2.4 and Corollary 3.2.5 in \cite{goldbach}, respectively. We will use this fact without explicit reference in the sequel. Moreover and without loss of generality, we can assume that $\norm{g_i}_{\alpha}>0$ for $i=1,2$. Throughout let $(g_{1,n})$ and $(g_{2,n})$ be sequences of simple functions such that $g_{i,n} \uparrow g_i$ for $i=1,2$ and $n \rightarrow \infty.$ In particular, \cref{12061903} states that 
\begin{equation} \label{eq:29071955}
I(g_{i,n}) \rightarrow I(g_i) \quad \text{in probability \quad (for $i=1,2$ as $n \rightarrow \infty)$.}
\end{equation}
\begin{itemize}
\item[(i)] Since we have that $I(g_{1,n}) \sim \Phi_{\alpha, \kappa}^f (\norm{g_{1,n}}_{\alpha})$, while $\norm{g_{1,n}}_{\alpha} \uparrow \norm{g_{1}}_{\alpha}$ by the monotone convergence theorem, the assertion follows from \eqref{eq:29071955} by passing through the limit. 
\item[(ii)] Obviously, the homogeneity property $I(ag_1)=a I(g_1)$ extends from the consideration of simple functions to the present case. Therefore it suffices to consider the case $a=b=1$ in the following. Then, since $g_{1,n} \vee g_{2,n} \uparrow g_1 \vee g_2 \in L^{\alpha}_{+}(m) $, we derive from \cref{12061903} together with the accuracy of \eqref{eq:01071999} for simple functions that
\begin{equation*}
I(g_1 \vee g_2)= \Pro \text{-} \limes{n}{\infty} I(g_{1,n} \vee g_{2,n}) = \Pro \text{-} \limes{n}{\infty} \left(I(g_{1,n}) \vee_f I(g_{2,n}) \right).
\end{equation*}
At this point, recall \eqref{eq:29071955} and note that $I(g_{1,n}) \vee_f I(g_{2,n}) \in \{I(g_{1,n}), I(g_{2,n}) \}$ for every $n \in \N$. Hence, we need that the $\vee_f$-operation is continuous, which is not true in general (see Example 1.1.4 in \cite{goldbach}). However, in order to ensure continuity in our situation (and therefore to obtain the assertion), we merely need that 
\begin{equation} \label{eq:29071957}
I(g_{1}) \vee_f I(g_{2})=I(g_{2}) \vee_f I(g_{1}) \quad \text{a.s.} 
\end{equation}
To prove \eqref{eq:29071957}, we first consider the case $g_1g_2=0$ $m$-a.e. Then, without loss of generality, we can also assume that $g_{1,n}g_{2,n}=0$ holds true $m$-a.e, which means that $I(g_{1,n})$ and $I(g_{2,n})$ are independent for every $n \in \N$. On the one hand, it is clear that the corresponding stochastic limits, namely $I(g_1)$ and $I(g_2)$, preserve this property. Hence, $f(I(g_1))$ and $f(I(g_2))$ are independent, too. On the other hand, we have that $f(I(g_i)) \sim \Phi_{\alpha}(\norm{g_i}_{\alpha})$ due to part (i), which means that $f(I(g_1)) \ne f(I(g_2))$ a.s. (essentially use \eqref{eq:03041907} for $\gamma=0$). In particular, \eqref{eq:29071957} is fulfilled, provided that $g_1g_2=0$ $m$-a.e. Moreover, \cref{01071910} states that \eqref{eq:29071957} is still true as long as we have $g_1 \le g_2$. Finally, writing $g_i= g_i \indikatorzwei{g_1 \le g_2} \vee \indikatorzwei{g_1 > g_2}$ for general $g_i \in L^{\alpha}_{+}(m)$ and using the associativity of the $\vee_f$-operation, we can combine both observations to derive that
\begin{align*}
I(g_1) \vee_f I(g_2)& = I(g_1 \indikatorzwei{g_1 \le g_2} ) \vee_f I(g_1 \indikatorzwei{g_1 > g_2} ) \vee_f I(g_2 \indikatorzwei{g_1 \le g_2} ) \vee_f I(g_2 \indikatorzwei{g_1 > g_2} ) \\
&= I(g_1 \indikatorzwei{g_1 \le g_2} )  \vee_f I(g_2 \indikatorzwei{g_1 \le g_2} ) \vee_f I(g_1 \indikatorzwei{g_1 > g_2} ) \vee_f I(g_2 \indikatorzwei{g_1 > g_2} ) \\
&= I(g_2 \indikatorzwei{g_1 \le g_2} ) \vee_f I(g_1 \indikatorzwei{g_1 \le g_2} )   \vee_f I(g_2 \indikatorzwei{g_1 > g_2} )  \vee_f I(g_1 \indikatorzwei{g_1 > g_2} ) \\
&= I(g_2 \indikatorzwei{g_1 \le g_2} ) \vee_f I(g_2 \indikatorzwei{g_1 > g_2} )  \vee_f I(g_1 \indikatorzwei{g_1 \le g_2} )    \vee_f I(g_1 \indikatorzwei{g_1 > g_2} ) \\
&=I(g_2) \vee_f I(g_1)
\end{align*}
holds true a.s., which shows \eqref{eq:29071957}.
\item[(iii)] The \textit{if}-part turns out to be a by-product of the proof of part (ii) before. Conversely, assume that $I(g_1)$ and $I(g_2)$ are independent. Then, although $g_1$ and $g_2$ are not necessarily simple functions, properties (i) and (ii) allow to imitate the according part in the proof of Proposition 3.2.4 (iv) in \cite{goldbach} to conclude that $g_1g_2=0$ $m$-a.e.
\item[(iv)] Let us first prove the \textit{only if}-part, where we can assume that $g_{1,n} \le g_{2,n}$ again. It follows that $f(I(g_{1,n})) \le f(I(g_{2,n}))$ a.s. and hence that $f(I(g_1)) \le f(I(g_2))$ a.s. In view of \eqref{eq:29071953} this would already imply that $I(g_1) \le_f I(g_2)$ a.s., provided that $\Pro(A)=0$ holds true, where the set $A$ is defined as in \eqref{eq:29071999}. Actually, this was just the outcome of the proof of \cref{01071910}. \newline 
Conversely, if $I(g_1) \le_f I(g_2)$ a.s., we can exactly use the idea that has been presented in the proof of Proposition 3.2.4 (iii) in \cite{goldbach} to obtain that $g_1 \le g_2$ $m$-a.e., since this only uses the properties (i) and (ii) again. For the additional statement of part (iv), merely note the following observation:
\begin{equation*} 
\forall x,y \in \R^d: \qquad  x = y \quad  \Leftrightarrow \quad x \le_f y \text{ \,  and \, } y \le_f x.
\end{equation*}
\end{itemize}
\end{proof}
The next result characterizes the convergence in probability of the occurring stochastic integrals, namely in terms of the corresponding deterministic integrands that belong to $L^{\alpha}_{+}(m)$. As a by-product, it also shows that any sequence of approximating functions $(g_n)$ can be used in \eqref{eq:08051920} to reach $I(g)$ as long as one of the conditions in \eqref{eq:09051933} below holds true. More precisely, we are neither restricted to simple functions nor to monotone sequences anymore.
\begin{theorem}  \label{12061911}
Consider $g,g_1,g_2,...L^{\alpha}_{+}(m)$. Then, as $n \rightarrow \infty$, we have:
\begin{equation} \label{eq:09051933}
I(g_n) \overset{\Pro}{\longrightarrow} I(g) \quad \Leftrightarrow \quad \int_E |g_n^{\alpha}-g^{\alpha}| \, dm \rightarrow 0  \quad \Leftrightarrow \quad \int_E |g_n-g|^{\alpha} \, dm \rightarrow 0.
\end{equation}
\end{theorem}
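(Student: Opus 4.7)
I will argue the three equivalences in the order $(2)\Leftrightarrow(3)$, $(1)\Rightarrow(2)/(3)$, and $(3)\Rightarrow(1)$.

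The equivalence $(2)\Leftrightarrow(3)$ is a purely analytic fact about the $L^\alpha$-quasi-metric on $L^\alpha_+(m)$ that does not involve the random sup-measure $M$ at all. It already appears as an ingredient in the classical theory and I would simply cite it from \cite{stoev}, since the argument is identical to the classical situation.

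For $(1)\Rightarrow(2)/(3)$, the key preparation is to extend \cref{23071901} from simple functions to arbitrary $g\in L^\alpha_+(m)$, namely that $f(I(g))$ equals almost surely the classical Stoev--Taqqu extremal integral of $g$ against the $\alpha$-Fr\'echet sup-measure $f\circ M$. For simple $g$ this is \cref{23071901}; for general $g$, choose simple $g^{(k)}\uparrow g$, observe via \cref{12061903}(b) and continuity of $f$ that $f(I(g^{(k)}))\to f(I(g))$ in probability, and compare with the monotone stochastic limit that defines the classical integral in \cite{stoev}. Assuming (1), continuity of $f$ then gives $f(I(g_n))\to f(I(g))$ in probability, so (2) and (3) follow from the classical counterpart of the present theorem in \cite{stoev}.

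For $(3)\Rightarrow(1)$, the main work, my plan relies on three ingredients. First, a lifting lemma: by \cref{09051933}(i) one has $I(h)\overset{d}{=}\|h\|_\alpha Z\Theta$ with $\Theta$ supported in the compact set $S$, so $\|I(h)\|\le C\,f(I(h))$ for $C:=\max_{s\in S}\|s\|$; consequently $\|h_n\|_\alpha\to 0$ implies $I(h_n)\to 0$ in probability. This disposes of the case $\|g\|_\alpha=0$. Secondly, for $\|g\|_\alpha>0$, I would split $E$ along the deterministic set $A_n:=\{g_n>g\}$: since the functions $g\,\indikatorzwei{A_n}$ and $g\,\indikatorzwei{A_n^c}$ have disjoint supports, \cref{09051933}(ii)--(iii) yields the independent $\vee_f$-decompositions
\begin{equation*}
I(g)=I(g\,\indikatorzwei{A_n})\vee_f I(g\,\indikatorzwei{A_n^c}),\qquad I(g_n)=I(g_n\,\indikatorzwei{A_n})\vee_f I(g_n\,\indikatorzwei{A_n^c}),
\end{equation*}
with $g\,\indikatorzwei{A_n}\le g_n\,\indikatorzwei{A_n}$ on $A_n$ and $g_n\,\indikatorzwei{A_n^c}\le g\,\indikatorzwei{A_n^c}$ on $A_n^c$, while $\|g_n\,\indikatorzwei{A_n}-g\,\indikatorzwei{A_n}\|_\alpha^\alpha+\|g\,\indikatorzwei{A_n^c}-g_n\,\indikatorzwei{A_n^c}\|_\alpha^\alpha=\|g_n-g\|_\alpha^\alpha\to 0$. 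Since $f(I(g\,\indikatorzwei{A_n}))$ and $f(I(g\,\indikatorzwei{A_n^c}))$ differ a.s.\ (both are continuously distributed and independent), the stability of $\vee_f$ under small, independent perturbations reduces the problem to the ``monotone-dominated'' case: if $h\le h_n$ with $\|h_n-h\|_\alpha\to 0$, then $I(h_n)-I(h)\to 0$ in probability.

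The third ingredient -- and the main obstacle -- is this monotone-dominated reduction, for which I expect to adapt the quantitative ``gap'' machinery already developed in the proofs of \cref{25041910,06051901,08051904}: on an event of arbitrarily high probability, the attained $\vee_f$-maximum defining $I(h)$ lies a definite factor strictly above the runner-up, and \cref{06051901} then translates any $L^\infty$-small perturbation of the coefficients -- obtained from a common consistent partition of $h$ and $h_n$ combined with Egorov's theorem exactly as in \cref{08051904} -- into a small perturbation of $I(h_n)$ in $\R^d$. The genuine difficulty is that $\vee_f$ is not jointly continuous (Example~1.1.4 in \cite{goldbach}), so the perturbation bound has to be built in tandem with the probabilistic gap estimate, in the same spirit as the Cauchy-in-probability argument leading to \eqref{eq:06051902}.
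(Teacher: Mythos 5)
Your outline gets the architecture right: citing \cite{stoev} for the second equivalence, pushing the implication ``convergence in probability implies $L^\alpha$-convergence'' through $f$ to the classical univariate setting, and isolating the converse as the real work. The first two parts match the paper's proof in substance. But for the converse there is a genuine gap: your entire argument funnels into the ``monotone-dominated'' claim (if $h\le h_n$ with $\norm{h_n-h}_{\alpha}\to 0$ then $I(h_n)\to I(h)$ in probability), and you propose to prove it by adapting \cref{25041910}. That proposition is not applicable in this configuration: its hypotheses force the approximating sequence to be dominated by the limit function (since $g_n\le h_{2,n}$ and $h_{2,n}\uparrow g$ imply $g_n\le g$), whereas in your reduction the approximants dominate the limit from above. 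The gap estimate you need would have to be re-derived for a sequence sitting above its limit, and nothing in \cref{25041910,08051904} does that. Moreover, $h_n$ is not simple, so before \cref{06051901} can even be invoked you need a common consistent partition, uniform ($L^\infty$) control of $h_n-h$ via Egorov, a lower truncation of $h$ away from $0$ and an upper truncation (to turn $L^\infty$-closeness into the hypothesis $\rho<\gamma(\sqrt{1+\delta}-1)$ with $\gamma$ bounded below), and a separate treatment of $\{h=0\}$ where no multiplicative comparison is possible. None of this is carried out.

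The paper resolves exactly this obstruction differently: rather than splitting along $\{g_n>g\}$, it truncates to $E_L=E_L'\cap(\{1/L\le g\le L\}\cup\{g=0\})$, approximates $g_n$ by simple $g_{n,\nu(n)}$, and then \emph{rescales} by a factor $\eta_n\uparrow 1$ so that $h_{1,n}:=\eta_n\,g_{n,\nu(n)}\indikatorzwei{E_L\setminus\{g=0\}}$ is genuinely dominated by $g\indikatorzwei{E_L}$ and converges to it uniformly. This converts the domination-from-above problem into the domination-from-below setting where \cref{12061920} (itself a cheap consequence of the interleaving argument in \cref{12061903}) applies, with the piece over $\{g=0\}$ handled by \cref{01081901} and the remainder $E_L^c$ by the argument of \cref{09051901}. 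Your splitting along $A_n=\{g_n>g\}$ also introduces an extra complication the paper avoids: both halves $I(g\indikatorzwei{A_n})$ and $I(g\indikatorzwei{A_n^c})$ move with $n$, so recombining under $\vee_f$ requires a uniform-in-$n$ quantitative gap between two independent, $n$-dependent Fr\'echet variables, not merely the a.s.\ statement that they differ for each fixed $n$. In short, the plan is plausible in spirit but the decisive step is missing, and the tool you point to for it does not apply as stated.
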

Before proving \cref{12061911}, we need two auxiliary results, where the first one deals with the deliverance from monotone sequences that we announced already before.
\begin{lemma} \label{12061920}
Consider $g \in L^{\alpha}_{+}(m)$ and assume that $(g_n)$ is a sequence of simple functions fulfilling $g_n \le g$ together with $g_n(s) \rightarrow g(s)$ for ($m$-almost) every $s \in E$. Then we obtain that $I(g_n)\rightarrow I(g)$ in probability as $n \rightarrow \infty$.
\end{lemma}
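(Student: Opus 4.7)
The plan is to recycle essentially verbatim the machinery used in the proof of \cref{12061903}(b), observing that the two supporting results \cref{08051904} and \cref{09051901} were deliberately stated without any monotonicity assumption on $(g_n)$: they only require $g_n \le g$ together with pointwise ($m$-a.e.) convergence to $g$. Therefore they apply directly to the sequence under consideration here.

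First I would fix $\eps>0$ and invoke \cref{08051904} to obtain increasing sets $E_1,E_2,\ldots \in \mathcal{E}_0$ with $m(E \setminus \bigcup_l E_l)=0$ such that for each $l$ the sequence $(I(g_n \indikatorzwei{E_l}))_n$ is Cauchy in probability. Next, by \cref{09051901}, I would pick some $L=L(\eps)$ with $\Pro(I(g_n) \neq I(g_n \indikatorzwei{E_l})) \le \eps/3$ for all $n,l \ge L$, and then choose $N$ so that $\Pro(\norm{I(g_m \indikatorzwei{E_L}) - I(g_n \indikatorzwei{E_L})} \ge \eps/3) \le \eps/3$ for $m,n \ge N$. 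Exactly the three-set decomposition used in (the proof of) \cref{12061903}(b), cf.\ \eqref{eq:30071936}, gives $\Pro(\norm{I(g_m) - I(g_n)} \ge \eps) \le \eps$ for $m,n \ge \max\{L,N\}$. Thus $(I(g_n))$ is Cauchy in probability and converges to some random vector $X$.

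It remains to identify $X$ with $I(g)$ a.s. For this I would choose, as in the proof of part~(a) of \cref{12061903}, a monotone sequence $(h_n)$ of simple functions with $h_n \uparrow g$; then $I(h_n) \to I(g)$ in probability by \cref{12061903}(b). Interleaving, define a new sequence $(k_\nu)$ by $k_{2n-1}:=g_n$ and $k_{2n}:=h_n$. This interleaved sequence still satisfies $k_\nu \le g$ and $k_\nu(s) \to g(s)$ for ($m$-almost) every $s \in E$, so the first part of the argument applies to it as well and $(I(k_\nu))$ converges in probability. Since its two subsequences $(I(g_n))$ and $(I(h_n))$ must share this stochastic limit, passing to an a.s.\ convergent subsequence forces $X = I(g)$ a.s.

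The main obstacle is already resolved upstream: all the real analytic work (the gap lemma \cref{03041905}, the truncation argument in \cref{06051901}, and the Egorov step in \cref{08051904}) has been done in the non-monotone setting, so the only genuine task here is to realise that the Cauchy argument used for monotone sequences was in fact monotonicity-free, and to add the short interleaving trick to pin down the limit.
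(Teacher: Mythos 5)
Your proposal is correct and follows essentially the same route as the paper: the paper's proof likewise exploits that \cref{08051904} and \cref{09051901} are monotonicity-free and then interleaves $(g_n)$ with a monotone approximating sequence $g_n' \uparrow g$ to identify the limit as $I(g)$. The only cosmetic difference is that you first establish Cauchy-ness of $(I(g_n))$ separately before interleaving, whereas the paper applies the Cauchy argument directly to the interleaved sequence.
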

\begin{proof}
Let $(g_n')$ be a sequence of simple functions fulfilling $g_n' \uparrow g$, which particularly means that $I(g)=\Pro$-$\lim_{n \rightarrow \infty} I(g_n')$. As in the proof of \cref{12061903}, define a new sequence $(h_{\nu})_{\nu \in \N}$ that alternates between $(g_n)$ and $(g_n')$. Again, it follows that $(I(h_{\nu}))$ converges in probability. Actually, this gives the assertion, since all subsequences yield the same limit. More precisely, 
\begin{equation*}
\Pro \text{-}\limes{n}{\infty} I(g_n)=  \Pro \text{-}\limes{n}{\infty} I(h_{2n-1}) = \Pro \text{-}\limes{n}{\infty} I(h_{2n})=\Pro \text{-}\limes{n}{\infty} I(g_n')= I(g).
\end{equation*}
\end{proof}
Note that the following observation could be stated in a more general framework. However, by doing it this way, it will allow an easy application within the proof of \cref{12061911} below.
\begin{lemma} \label{01081901}
Let $(h_{1,n}) \subset L^{\alpha}_{+}(m) $ fulfill $X=\Pro$-$\lim_{n \rightarrow \infty} I(h_{1,n})$ for some random vector $X$. Assume that $(h_{2,n})$ is a further sequence of functions such that we have $0 \le h_{2,n} \le \gamma_n \indikatorzwei{A}$ for every $n \in \N$, where $A \in \mathcal{E}_0$ and where $(\gamma_n) \subset \R_+$ converges to zero. Then we have that $I(h_{1,n} \vee h_{2,n}) \rightarrow X$ in probability, too (as $n \rightarrow \infty$).
\end{lemma}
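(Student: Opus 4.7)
The plan is to rewrite $I(h_{1,n} \vee h_{2,n})$ via max-linearity, show that $I(h_{2,n})$ vanishes in probability, and then handle the well-known discontinuity of $\vee_{f}$ by splitting on how small $f(X)$ can be.

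First, $h_{1,n}\vee h_{2,n} \in L^{\alpha}_{+}(m)$, since $(h_{1,n}\vee h_{2,n})^{\alpha} \le h_{1,n}^{\alpha} + h_{2,n}^{\alpha} \in L^{1}(m)$ (use $h_{2,n}\le \gamma_{n}\indikatorzwei{A}$ and $m(A)<\infty$). Hence, by part (ii) of \cref{09051933},
\eq{I(h_{1,n}\vee h_{2,n}) = I(h_{1,n})\vee_{f} I(h_{2,n})\quad \text{a.s.}}
Set $Y_{n}:=I(h_{1,n})$ and $Z_{n}:=I(h_{2,n})$. From part (i) of \cref{09051933} we have $f(Z_{n}) \sim \Phi_{\alpha}(\norm{h_{2,n}}_{\alpha})$ with $\norm{h_{2,n}}_{\alpha}^{\alpha} \le \gamma_{n}^{\alpha}\, m(A)\to 0$, so $f(Z_{n})\to 0$ in probability. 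Using the elementary inequality $\norm{x}\le C\, f(x)$ for every $x\in \R^{d}$, where $C=\max\{\norm{y}:y\in S\}$ (valid because $x/f(x)\in S$ for $x\ne 0$), it follows that $Z_{n}\to 0$ in probability. By assumption $Y_{n}\to X$ in probability, hence $f(Y_{n})\to f(X)$ by continuity.

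The main obstacle is now clear: $\vee_{f}$ is discontinuous where the two $f$-values tie, and this region cannot be avoided in a neighbourhood of $\{f(X)=0\}$. I handle it by fixing $\eta,\eps>0$ and choosing $\delta_{0}>0$ small enough that $\Pro(0<f(X)\le 2\delta_{0})<\eps/3$, which is possible because $\{0<f(X)\le \delta\}\downarrow \emptyset$ as $\delta \downarrow 0$. For $n$ large, I intersect the four events
\eq{\{|f(Y_{n})-f(X)|\le \delta_{0}\},\;\{f(Z_{n})\le \delta_{0}\},\;\{\norm{Y_{n}-X}\le \eta/2\},\;\{\norm{Z_{n}}\le \eta/2\}}
to obtain a set $G_{n}$ with $\Pro(G_{n})>1-2\eps/3$. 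On $G_{n}\cap\{f(X)>2\delta_{0}\}$, one has $f(Y_{n})>\delta_{0}\ge f(Z_{n})$, so $Y_{n}\vee_{f} Z_{n}=Y_{n}$ and hence $\norm{Y_{n}\vee_{f} Z_{n}-X}\le \eta/2$. On $G_{n}\cap\{f(X)=0\}$, property (ii) of $f$ forces $X=0$, and since $Y_{n}\vee_{f} Z_{n}\in\{Y_{n},Z_{n}\}$ one has $\norm{Y_{n}\vee_{f} Z_{n}-X}=\norm{Y_{n}\vee_{f} Z_{n}}\le \max(\norm{Y_{n}},\norm{Z_{n}})\le \eta/2$.

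Putting everything together,
\al{\Pro(\norm{I(h_{1,n}\vee h_{2,n})-X}>\eta) \le \Pro(G_{n}^{c})+\Pro(0<f(X)\le 2\delta_{0}) < \tfrac{2\eps}{3}+\tfrac{\eps}{3}=\eps}
for every sufficiently large $n$. Since $\eta,\eps>0$ were arbitrary, $I(h_{1,n}\vee h_{2,n})\to X$ in probability as claimed.
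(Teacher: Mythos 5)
Your argument is correct and follows essentially the same route as the paper: rewrite $I(h_{1,n}\vee h_{2,n})=I(h_{1,n})\vee_f I(h_{2,n})$ via max-linearity, show $I(h_{2,n})\to 0$, and then justify passing to the limit through the (generally discontinuous) $\vee_f$-operation. The only difference is that the paper disposes of the last step by citing Lemma 1.1.9 in \cite{goldbach} (continuity of $\vee_f$ at pairs $(x,y)$ with $x\vee_f y=y\vee_f x$, which holds here since one limit is $0$), whereas you prove it directly by splitting on $\{f(X)>2\delta_0\}$, $\{f(X)=0\}$ and the small exceptional set $\{0<f(X)\le 2\delta_0\}$ --- a correct and self-contained substitute.
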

\begin{proof}
Using homogeneity and the $f$-implicit monotonicity from \cref{09051933}, we first obtain that $f(I(h_{2,n})) \le \gamma_n f(M(A))$ a.s. (recall \eqref{eq:29071953}), which shows that $f(I(h_{2,n})) \rightarrow 0$ a.s. In view of Lemma 3.1.14 in \cite{goldbach} this also implies that $I(h_{2,n}) \rightarrow 0$ a.s. Moreover, since $I(h_{1,n} \vee h_{2,n})=I(h_{1,n}) \vee_f I(h_{2,n})$ a.s. due to \eqref{eq:01071999}, we merely need that the $\vee_f$-operation provides continuity a.s. For this purpose, recall the proof of the $f$-implicit max-linearity above or use Lemma 1.1.9 in \cite{goldbach}, respectively. 
\end{proof}
\begin{proof}[Proof of \cref{12061911}] Note that the last-mentioned equivalence in \eqref{eq:09051933} corresponds to Lemma 2.3 in \cite{stoev}. Hence, if we recall \eqref{eq:29071988}, the present proof reduces to the following:
\begin{equation} \label{eq:30071966}
I(g_n) \overset{\Pro}{\longrightarrow} I(g) \quad \Leftrightarrow \quad \norm{g_n^{\alpha}-g^{\alpha}}_1 \rightarrow 0 \qquad \text{(as $n \rightarrow \infty$).}
\end{equation}
Observe that, in the case $\norm{g}_{\alpha}=0$, we have $I(g)=0$ a.s. together with $\norm{g_n^{\alpha}-g^{\alpha}}_1=\norm{g_n}_{\alpha}$. Since \cref{09051933} implies that $I(g_n) \sim \Phi_{\alpha, \kappa}^f (\norm{g_n}_{\alpha})$ (see \eqref{eq:08071901}), it is easy to verify that \eqref{eq:30071966} holds true in this case. Thus, let us assume that $0<\norm{g}_{\alpha} < \infty$ in the sequel. \newline
Then, in order to prove \eqref{eq:30071966}, we first suppose that $I(g_n) \rightarrow I(g)$ in probability, which also implies that $f(I(g_n)) \rightarrow f(I(g))$ by the continuous mapping theorem. Recall that $f(I(g_n)) \sim \Phi_{\alpha}(\norm{g_n}_{\alpha})$ and $f(I(g)) \sim \Phi_{\alpha}(\norm{g}_{\alpha})$, respectively. Hence, a combination of \eqref{eq:29071993} and \eqref{eq:01071999} shows that
\begin{equation*}
f(I(g_n)) \vee f(I(g))= f(I(g_n) \vee_f I(g)) = f(I (g_n \vee g)) \quad a.s.
\end{equation*}
Accordingly, it follows that $f(I(g_n) \vee f(I(g))  \sim \Phi_{\alpha} (\norm{g_n \vee g}_{\alpha})$. Based on this, we can mostly follow the proof of Theorem 2.1 in \cite{stoev} to obtain that $\norm{g_n^{\alpha}-g^{\alpha}}_1 \rightarrow 0$. The details are left to the reader.  \\
Conversely, assume that $\norm{g_n^{\alpha}-g^{\alpha}}_1 \rightarrow 0$ holds true. Unfortunately, this merely implies that there exists a suitable subsequence along which $g_n$ converges to $g$ ($m$-a.e.). At the same time, it only remains to prove that $I(g_n) \rightarrow I(g)$ in probability, which can be characterized in the following way (use Theorem 20.5 in \cite{billingsley} for instance): Each subsequence of $(I(g_n))$ contains a further subsequence that converges to $I(g)$ in probability. Keeping this in mind, the following steps will reveal that, without loss of generality, we can already assume that $g_n \rightarrow g$ $m$-a.e as $n \rightarrow \infty$. Now fix $\eps>0$. Then we can use Egorov's Theorem again to obtain a sequence of increasing sets $E_1',E_2',... \in \mathcal{E}$ with $m (E \setminus \cup_{l=1}^{\infty} E_l') =0 $ and such that, for every $l \in \N$, the convergence $g_n \indikatorzwei{E_l'} \rightarrow g \indikatorzwei{E_l'}$ holds uniformly. Clearly, the previous observation remains true for $E_1,E_2,...$ (instead of $E_1',E_2',...)$, defined by 
\begin{equation*} 
E_l:=E_l ' \cap ( \{1/l \le g \le l \} \cup \{g=0\}), \quad l \in \N.
\end{equation*}
Note that $\norm{g_n }_{\alpha}^{\alpha} \rightarrow   \norm{g}_{\alpha}^{\alpha}>0$ by assumption. Using this together with \cref{09051933} and the fact that, for any $n,l \in \N$, the estimation
\begin{equation*}
\norm{g_n \indikatorzwei{E_l^c}}_{\alpha}^{\alpha}  \le \norm{g_n^{\alpha}-g^{\alpha}}_{1}^{\alpha}  + \norm{g \indikatorzwei{E_l^c}}_{\alpha}^{\alpha} 
\end{equation*}
is valid, we can argue as in the proof of \cref{09051901} to verify that \eqref{eq:12061901} is fulfilled accordingly. Obviously, the argument includes the consideration of the function $g$. More precisely, if we let $g_0:=g$ for the moment, there exists some $L=L(\eps)$ such that
\begin{equation*}
\forall n \in \{0,L,L+1,...\} \, \, \forall  l \ge L: \quad \Pro( I(g_n) \ne I(g_n \indikatorzwei{E_l})) \le   \eps/3.
\end{equation*}
If we use the triangular inequality (compare \eqref{eq:30071936}), it follows for any $n \ge L$ that
\begin{equation*}
\Pro(\norm{I(g_n)-I(g)} \ge  \eps)  \le \frac{2 \eps}{3 } + \Pro(\norm{I(g_n \indikatorzwei{E_{L}})-I(g \indikatorzwei{E_{L}})} \ge  \eps /3 ).
\end{equation*}
Hence, as already argued elsewhere, it suffices to show that the following relation holds true:
\begin{equation} \label{eq:30071978}
\Pro(\norm{I(g_n \indikatorzwei{E_{L}})-I(g \indikatorzwei{E_{L}})} \ge  \eps/3) \le \frac{\eps}{3}  \quad \text{for almost all $n$}. 
\end{equation}
For every $n \in \N$, let $(g_{n, \nu})_{\nu}$ be a sequence of simple functions with $g_{n, \nu} \uparrow g_n$ and such that the convergence $g_{n, \nu} \indikatorzwei{E_{L}} \rightarrow g_n \indikatorzwei{E_{L}}$ holds uniformly (as $ \nu \rightarrow \infty$) . Note that this is possible, since $g \indikatorzwei{E_L} \le L$, which means that $g_n \indikatorzwei{E_{L}}$ is also bounded (at least for almost all $n$). Recall the notation $\norm{\cdot}_{\infty}$ from \eqref{eq:06051911}. Then, according to \cref{12061903}, we can even find a strictly increasing sequence $(\nu(n))_n$ of naturals such that, for those $n \in \N$, 
\begin{equation*}
\norm{g_n \indikatorzwei{E_L}   - g_{n,\nu(n)} \indikatorzwei{E_L}    }_{\infty} \le 1/n   \quad \text{and} \quad \Pro(\norm{  I(g_{n} \indikatorzwei{E_{L}})  -  I(g_{n,\nu(n)} \indikatorzwei{E_{L}})  } \ge 1/n) \le 1/n  
\end{equation*}
hold true. In other words, as $n \rightarrow \infty$, we have that
\begin{equation} \label{eq:24061901}
  \norm{g_n \indikatorzwei{E_L}   - g_{n,\nu(n)} \indikatorzwei{E_L}    }_{\infty} \rightarrow 0 \quad \text{and} \quad I(g_{n} \indikatorzwei{E_{L}})  -  I(g_{n,\nu(n)} \indikatorzwei{E_{L}}) \overset{\Pro}{\longrightarrow} 0.
\end{equation}
In addition, we define the sequence $(\eta_n)$ by
\begin{equation*}
\eta_n:=\frac{L^{-1}}{L^{-1} + \norm{ g_n \indikatorzwei{E_{L}  }  -g \indikatorzwei{E_{L} } }_{\infty} } \quad \in (0,1]
\end{equation*}
and observe that, for every  $n \in \N$ and $s \in E_{L} \setminus \{g=0\} $, the following calculation is valid: 
\begin{equation*}
 g_n (s) \le \frac{g(s) +  \norm{ g_n \indikatorzwei{E_{L}  }  -g \indikatorzwei{E_{L} } }_{\infty} }{g(s)} g(s) \le \left (1+ \frac{ \norm{ g_n \indikatorzwei{E_{L}  }  -g \indikatorzwei{E_{L} } }_{\infty} }{L^{-1}} \right ) g(s) =\eta_n^{-1} g(s).
\end{equation*}
In view of $g_{n,\nu(n)} \le g_ n$ this shows that $h_{1,n}:= \eta_n \, g_{n,\nu(n)} \indikatorzwei{E_{L} \setminus \{g=0\}  } $ defines a simple function fulfilling $h_{1,n} \le g \indikatorzwei{E_L}$ for every $n \in \N$. Combine \eqref{eq:24061901} with $\norm{ g_n \indikatorzwei{E_{L} }  -g \indikatorzwei{E_{L} } }_{\infty} \rightarrow 0$ (see above) and note that $\eta_n \rightarrow 1$ to verify that $\norm{h_{1,n} - g \indikatorzwei{E_L} }_{\infty} \rightarrow 0$. In particular, \cref{12061920} implies that $I(h_{1,n}) \rightarrow I(g \indikatorzwei{E_{L}})$ in probability. Finally, let $h_{2,n}:=\eta_n \, g_{n,\nu(n)} \indikatorzwei{E_{L} \cap \{g=0\}  } $ and observe that 
\begin{equation} \label{eq:01081910}
h_n:= h_{1,n} \vee h_{2,n} =  \eta_n \, g_{n,\nu(n)} \indikatorzwei{E_{L}}, \quad n \in \N.
\end{equation}
Using $g_{n,\nu(n)} \le g_ n$ again, we also conclude that $h_{2,n} \le \norm{ g_n \indikatorzwei{E_{L} }  -g \indikatorzwei{E_{L} } }_{\infty} \, \indikatorzwei{E_{L} \cap \{g=0\}}$ holds true. Hence, the assumptions of \cref{01081901} are fulfilled and we obtain that $I(h_n) \rightarrow I(g \indikatorzwei{E_{L}})$ in probability. Finally, we benefit from the intimate relation between $g_{n, \nu(n)}$ and $h(n)$. More precisely, \eqref{eq:01081910} and the homogeneity of the stochastic integral lead to the fact that
\begin{equation*} 
I(g_{n,v(n)} \indikatorzwei{E_{L}}) - I(h_n) =(\eta_n^{-1}-1) I(h_n) \rightarrow 0 \cdot I(g \indikatorzwei{E_{L}}) = 0 
\end{equation*}
in probability as $n \rightarrow \infty$. If we write
\begin{equation*}
I(g_n \indikatorzwei{E_{L}}) -I(g \indikatorzwei{E_{L}})= I(g_n \indikatorzwei{E_{L}}) - I(g_{n,\nu(n)} \indikatorzwei{E_{L}}) + I(g_{n,\nu(n)} \indikatorzwei{E_{L}}) - I(h_n) + I(h_n) - I(g \indikatorzwei{E_{L}})
\end{equation*}
and use the previous outcome, if follows that $I(g_n \indikatorzwei{E_{L}}) -I(g \indikatorzwei{E_{L}}) \rightarrow 0$ in probability, which particularly implies the accuracy of \eqref{eq:30071978}. This completes the proof.
\end{proof}
\begin{remark}
For $g:E \rightarrow \R$ we can write $g=g^{+}-g^{-}$, where $g^{\pm}=\max\{0, \pm g\}$. Then, assuming that $g \in L^{\alpha}(m)$ (see \eqref{eq:29071988}), the classical approach would be to define $I(g)$ by $I(g^{+})-I(g^{-})$. However, this would be very debilitating with regard to \cref{09051933} and the desired properties of the stochastic integral. Actually, it could be interesting to use the definition $I(g)=I(g^{+}) \vee_f I(g^{-})$ instead. \newline
Apart from this and according to \cite{lixiao} for example, it could also be nice to consider matrix-valued functions $g$ as integrand. In this context, view \cite{bms} for the notion of so-called \textit{$B$-homogeneous} functions (where $B$ is a $d \times d$ matrix), which turns out to be an extension of $1$-homogeneous functions. But honestly, things will certainly become much more complicated in this case, since \cref{06051901} will no longer work, just to mention one reason.
\end{remark}
Let us also remark that there is no intuitive counterpart to Proposition 2.8 in \cite{stoev} (even if we use the $\le_f$ order). At this point, we finish the discussion about general properties of the $f$-implicit extremal integral. In the sequel we rather want to illustrate possible benefits of the theory that we recently developed. 
One aspect is that we retrieve the max-stable extremal integral that has been constructed in \cite{stoev}, leading to univariate random variables. For this purpose and as already indicated in the proof of \cref{25041910}, we merely have to manipulate our approach by considering $L_{+}^{\alpha}(m) \ni g \mapsto f(I(g))$ instead. On the other hand, there is also a straight possibility to do so, which means that we should finally talk about concrete choices of the loss function $f$.
\begin{example} \label{15071911}
Obviously, every norm on $\R^d$ can serve as loss function. However, for the rest of this example, let us consider the special case $d=1$ with 
\begin{equation*}
f_0=|\cdot| \quad \text{and} \quad S_0=\{f_0=1\}=\{-1,1\}. 
\end{equation*}
Then, for $x_1,x_2 \ge 0$ (which is the typical setting in the context of classical EVT), we see that 
\begin{equation} \label{eq:02081901}
x_1 \vee_{f_0} x_2= x_1 \vee x_2 \quad \text{as well as} \quad x_1 \le_{f_0} x_2 \Leftrightarrow x_1 \le x_2.
\end{equation}
At the same time, letting $\kappa=\eps_1$ in the context of \cref{11070901}, it follows that $M^{f_0}(A) \sim \Phi_{\alpha}(m(A)^{1/ \alpha})$. In particular, we have that $M^{f_0}(A) \ge 0$ a.s. for every $A \in \mathcal{E}_0$. It follows that the observation \eqref{eq:02081901} remains accordingly true for the occurring stochastic integrals, at least a.s. For instance note that \eqref{eq:01071999} becomes
\begin{equation*}
I(ag_1 \vee b g_2) = a I(g_1) \vee b I(g_2) \quad \text{a.s.}
\end{equation*}
in this case, which is just the so-called \textit{max-linearity} in the sense of \cite{stoev}. In general, it turns out that \cref{12061903}, \cref{09051933}, and \cref{12061911} are natural extensions of the corresponding results in \cite{stoev}.
\end{example}
It is well-known that stochastic integrals are often used for the representation of stochastic processes (or random fields, respectively), where the properties of the considered integral usually determine the spectrum of possible representations (see \cref{section1}). Hence, in view of \cref{09051933}, it should not surprise that we introduce the following notion which is due to Definition 3.0.1 in \cite{goldbach}. 
\begin{defi} \label{12071904}
Let $I \ne \emptyset$ be some index set. Then, an $\R^d$-valued stochastic process $\mathbb{X}=\{X(t):t \in I\}$ is called \textit{$f$-implicit max-stable} if, for all $k \in \N$ and $a_1,...,a_k \ge 0$ as well as $t_1,...,t_k \in I$, the random vector
\begin{equation*}
\xi:=\fmax{j=1}{k} a_j X(t_j)
\end{equation*}
is $f$-implicit max-stable (in the sense of \eqref{eq:08071902}).
\end{defi}
Finally, inspired by the example from \eqref{eq:12071933} and as some kind of an outlook, we state the following observation, which can be easily concluded from \cref{09051933} and the fact that $f$-implicit $\alpha$-Fr\'{e}chet distributions are always $f$-implicit max-stable ones (see \cref{section1} again). The details are left to the reader. 
\begin{prop}
Fix $I \ne \emptyset$ and let $M$ be an $f$-implicit $\alpha$-Fr\'{e}chet sup-measure as before. Moreover, consider a family $(g_t)_{t \in I} \subset L^{\alpha}_{+}(m)$ of functions and define $X(t):=I(g_t)$ for every $t \in I$. Then, the resulting process $\mathbb{X}=\{X(t):t \in I\}$ is $f$-implicit max-stable. More precisely, for $a_1,...,a_k \ge 0$ and $t_1,...,t_k \in I$ as above, we a.s. have that
\begin{equation*}
\fmax{j=1}{k} a_j X(t_j)= \int_E^{\vee_f} \left( \bigvee_{j=1}^k a_j g_{t_j}(s) \right) \, dM(s) \quad \sim \quad \Phi_{\alpha,\kappa}^f(\norm{\vee_{j=1}^k a_j g_{t_j}}_{\alpha}).
\end{equation*}
\end{prop}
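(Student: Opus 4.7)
My plan is to reduce everything to the $f$-implicit max-linearity and the distributional identity for $I(\cdot)$ that were established in \cref{09051933}. The key observation is that the identity
\begin{equation*}
\fmax{j=1}{k} a_j X(t_j) = \int_E^{\vee_f} \left( \bigvee_{j=1}^k a_j g_{t_j}(s) \right) dM(s) \quad \text{a.s.}
\end{equation*}
is really \cref{09051933} (ii) applied $k-1$ times, after which the distributional statement follows from \cref{09051933} (i), and the $f$-implicit max-stability of $\xi$ is immediate from the fact (recalled in \cref{section1}) that any $f$-implicit $\alpha$-Fr\'{e}chet distribution is $f$-implicit max-stable.

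In more detail, I would first verify that $h:=\bigvee_{j=1}^k a_j g_{t_j}$ belongs to $L^{\alpha}_{+}(m)$, which is immediate from the pointwise bound $h^{\alpha}\le \sum_{j=1}^k a_j^{\alpha} g_{t_j}^{\alpha}$ and the assumption $g_{t_j} \in L^{\alpha}_{+}(m)$. Then I would proceed by induction on $k$. The case $k=1$ is trivial, and for $k \ge 2$ I would set $h_{k-1}:=\bigvee_{j=1}^{k-1} a_j g_{t_j} \in L^{\alpha}_{+}(m)$ and apply \cref{09051933} (ii) with $g_1=h_{k-1}$, $g_2=g_{t_k}$, $a=1$ and $b=a_k$, obtaining
\begin{equation*}
I(h_{k-1} \vee a_k g_{t_k})= I(h_{k-1}) \vee_f a_k I(g_{t_k}) \quad \text{a.s.}
\end{equation*}
Combining with the induction hypothesis $I(h_{k-1})=\fmax{j=1}{k-1} a_j X(t_j)$ a.s. and using the associativity of $\vee_f$, the desired identity follows.

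Finally, \cref{09051933} (i) applied to $h$ yields $I(h) \sim \Phi_{\alpha,\kappa}^f(\norm{h}_{\alpha})$, which gives the distributional assertion. For the $f$-implicit max-stability of $\mathbb{X}$, I only need to observe that any random vector $Y\sim \Phi_{\alpha,\kappa}^f(\sigma)$ is itself $f$-implicit max-stable in the sense of \eqref{eq:08071902}: writing $Y=\sigma Z \Theta$ as in \eqref{eq:08071901} and using that $f(Y_j)=\sigma Z_j \sim \Phi_{\alpha}(\sigma)$ while the argmax index $j_0$ depends only on the i.i.d.\ $Z_j$'s and hence is independent of the $\Theta_j$'s, one readily checks that $b_k^{-1} \fmax{j=1}{k} Y_j \overset{d}{=} Y$ with $b_k=k^{1/\alpha}$. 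Since this fact is already recalled in \cref{section1}, no genuine obstacle arises. The only mild subtlety is to apply \cref{09051933} (ii) with the non-simple integrand $h_{k-1}$ rather than with the original $g_{t_j}$'s, which is legitimate precisely because \cref{09051933} is stated for arbitrary $g_1,g_2 \in L^{\alpha}_{+}(m)$.
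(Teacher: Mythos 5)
Your proof is correct and follows exactly the route the paper intends: the paper leaves the details to the reader, indicating only that the claim follows from \cref{09051933} and the fact that $f$-implicit $\alpha$-Fr\'{e}chet distributions are $f$-implicit max-stable, and you fill in precisely those details (inductive application of the max-linearity (ii), the distributional identity (i), and the standard $b_k=k^{1/\alpha}$ stability argument). Your observation that (ii) is stated for arbitrary elements of $L^{\alpha}_{+}(m)$, so that it may be applied to the non-simple function $\bigvee_{j=1}^{k-1}a_j g_{t_j}$, is exactly the point that makes the induction legitimate.
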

\section*{Acknowledgement}
The author would like to emphasize that this paper is inspired by the fundamental results in \cite{goldbach}, which my former colleague Johannes Goldbach developed during his PhD time under the supervision of Hans-Peter Scheffler. Moreover, the author wants to thank Marco Oesting for many fruitful discussions that were particularly helpful in the context of \cref{03041905}.


\begin{thebibliography}{10}

\bibitem{bms}
H.~Bierm{\'e}, M.~M. Meerschaert, and H.-P. Scheffler.
\newblock Operator scaling stable random fields.
\newblock {\em Stochastic Processes and their Applications}, 117(3):312--332,
  2007.

\bibitem{billingsley}
P.~Billingsley.
\newblock {\em Probability and measure}.
\newblock John Wiley \& Sons, 2008.

\bibitem{fondeville}
R.~de~Fondeville and A.~C. Davison.
\newblock High-dimensional peaks-over-threshold inference.
\newblock {\em Biometrika}, 105(3):575--592, 2018.

\bibitem{extreme}
L.~De~Haan and A.~Ferreira.
\newblock {\em Extreme value theory: an introduction}.
\newblock Springer Science \& Business Media, 2007.

\bibitem{dombry}
C.~Dombry and M.~Ribatet.
\newblock Functional regular variations, pareto processes and peaks over
  threshold.
\newblock {\em Statistics and its Interface}, 8(1):9--17, 2015.

\bibitem{Els07}
J.~Elstrodt.
\newblock {\em Ma{\ss}-und Integrationstheorie}.
\newblock Springer, 2006.

\bibitem{goldbach}
J.~Goldbach.
\newblock {\em A new approach to multivariate extreme value theory: f-implicit
  max-infinitely divisible distributions and f-implicit max-stable processes}.
\newblock PhD thesis, University of Siegen.

\bibitem{Kle08}
A.~Klenke.
\newblock {\em Wahrscheinlichkeitstheorie}.
\newblock Springer, 2006.

\bibitem{paper}
D.~Kremer and H.-P. Scheffler.
\newblock Multivariate stochastic integrals with respect to independently
  scattered random measures on $\delta$-rings.
\newblock {\em Publicationes Mathematicae Debrecen}, 2019.
\newblock Accepted.

\bibitem{lixiao}
Y.~Li and Y.~Xiao.
\newblock Multivariate operator-self-similar random fields.
\newblock {\em Stochastic Processes and their Applications}, 121(6):1178--1200,
  2011.

\bibitem{RajRo89}
B.~S. Rajput and J.~Rosinski.
\newblock Spectral representations of infinitely divisible processes.
\newblock {\em Probability Theory and Related Fields}, 82(3):451--487, 1989.

\bibitem{SaTaq94}
G.~Samoradnitsky and M.~S. Taqqu.
\newblock {\em Stable non-Gaussian random processes: stochastic models with
  infinite variance}.
\newblock CRC press, 1994.

\bibitem{implicit}
H.-P. Scheffler and S.~Stoev.
\newblock Implicit extremes and implicit max--stable laws.
\newblock {\em Extremes}, 20(2):265--299, 2017.

\bibitem{stoev}
S.~A. Stoev and M.~S. Taqqu.
\newblock Extremal stochastic integrals: a parallel between max-stable
  processes and $\alpha$-stable processes.
\newblock {\em Extremes}, 8(4):237--266, 2005.

\end{thebibliography}
\end{document}